\documentclass[11pt, letterpaper, oneside,reqno]{amsart}

\headheight=8pt     \topmargin=0pt \textheight=624pt
\textwidth=432pt \oddsidemargin=18pt \evensidemargin=18pt

\usepackage{latexsym, amsmath, amssymb, amsfonts, amscd,bm}
\usepackage{amsthm}
\usepackage{t1enc}
\usepackage[mathscr]{eucal}
\usepackage{indentfirst}
\usepackage{graphicx, pb-diagram}
\usepackage{fancyhdr}
\usepackage{fancybox}
\usepackage{enumerate}
\usepackage[all]{xy}
\usepackage{hyperref}
\usepackage{tikz-cd}
\usepackage{mathtools}
\usepackage{blkarray}
\usepackage{url}
\usepackage{float}
\usepackage{bm}
\usepackage[makeroom]{cancel}
\usepackage{xcolor}

\theoremstyle{plain}
\newtheorem{thm}{Theorem}[section]

\newtheorem{prop}[thm]{Proposition}

\newtheorem*{goal}{Goal}
\newtheorem*{question}{Question}

\newtheorem{lemma}[thm]{Lemma}
\newtheorem{cor}[thm]{Corollary}

\newtheoremstyle{underline}
{}        
{}              
{}              
{}    
{\large}              
{:}             
{1mm}         
{{\underline{\thmname{#1}\thmnumber{ #2}}}}  

\theoremstyle{underline}
\newtheorem*{claim*}{Claim}

\theoremstyle{definition}
\newtheorem{defi}[thm]{Definition}

\theoremstyle{remark}
\newtheorem{remark}[thm]{Remark}
\newtheorem{ex}[thm]{Example}

\newtheorem*{ack}{Acknowledgements}




\definecolor{forest}{rgb}{0,0.5,0}


\subjclass[2020]{57R30, 53C24, 37C85, 22E15}

\begin{document}
	
	\title{Infinitesimally rigid Lie foliations with dense leaves}
	
	\author{Stephane Geudens}
	\address{{\scriptsize Institute of Mathematics, Polish Academy of Sciences,  ul. Sniadeckich 8, 00-656 Warsaw, Poland}}
	\email{stephane\_geudens@hotmail.com}
	
	\author{Florian Zeiser}
	\address{{\scriptsize Center for Geometry and Physics, Institute for Basic Science, 79 Jigok-ro 127beon-gil, Nam-gu, Pohang-si, Gyeongsangbuk-do, Republic of Korea 37673}}
	\email{fzeiser@ibs.re.kr}

	\begin{abstract}
		We call a foliation $\mathcal{F}$ on a compact manifold infinitesimally rigid if its deformation cohomology $H^{1}(\mathcal{F},N\mathcal{F})$ vanishes. This paper studies infinitesimal rigidity for a distinguished class of Riemannian foliations, namely Lie foliations with dense leaves. We construct infinitesimally rigid Lie foliations with dense leaves, modeled on any compact semisimple Lie algebra with simple ideals different from $\mathfrak{so}(3)$. To our knowledge, these are the first examples of infinitesimally rigid Riemannian foliations that are not Hausdorff.
	\end{abstract}
	
	\maketitle
	
	\setcounter{tocdepth}{1} 
	\tableofcontents
	
	\section{Introduction}
	A foliation $\mathcal{F}$ on a compact manifold $M$ is called rigid if any other foliation sufficiently close to $\mathcal{F}$ in the $C^{\infty}$-topology is conjugate to $\mathcal{F}$ under a diffeomorphism of $M$. Here the $C^{\infty}$-topology on the space of $k$-dimensional foliations is induced by the inclusion 
	\[ \mathrm{Fol}_k(M)\hookrightarrow \Gamma(\mathrm{Gr}_k(M)):\mathcal{F}\mapsto T\mathcal{F},\]
	where $\mathrm{Gr}_k(M)\to M$ is the Grassmannian bundle of $k$-planes on $M$. When studying rigidity of foliations, the first step is understanding the corresponding requirement at the infinitesimal level. This turns out to be the vanishing of  a suitable cohomology group $H^{1}(\mathcal{F},N\mathcal{F})$. The definition of this cohomology group relies on the Bott connection $\nabla$, which is a canonical representation of the Lie algebroid $T\mathcal{F}$ on its normal bundle $N\mathcal{F}$. The Bott  connection gives rise to a complex $\big(\Omega^{\bullet}(\mathcal{F},N\mathcal{F}),d_{\nabla}\big)$ with cohomology groups $H^{\bullet}(\mathcal{F},N\mathcal{F})$. Heitsch \cite{Heitsch} showed that first order deformations of the foliation $\mathcal{F}$, modulo those obtained by applying an isotopy to $\mathcal{F}$, are given by elements in $H^{1}(\mathcal{F},N\mathcal{F})$. It is therefore justified to call $\mathcal{F}$ infinitesimally rigid if $H^{1}(\mathcal{F},N\mathcal{F})$ vanishes. Clearly, infinitesimally rigid foliations are the natural candidates for actual rigidity. This philosophy is embodied by a deep theorem due to Hamilton \cite{Hamilton} which shows that rigidity is implied by a strong version of infinitesimal rigidity, namely the existence of ``tame'' homotopy operators for the complex $\big(\Omega^{\bullet}(\mathcal{F},N\mathcal{F}),d_{\nabla}\big)$.
	
	\vspace{0.2cm}
	
	Riemannian foliations form a distinguished class of foliations which lend themselves to the study of (infinitesimal) rigidity. They are characterized by the existence of a bundle-like Riemannian
	metric \cite{Reinhart}. Equivalently, they are locally defined by Riemannian submersions. On one hand, studying rigidity of Riemannian foliations seems more tractable because of a result by Hamilton \cite{Hamilton}, which outlines a strategy for the construction of tame homotopy operators in the Riemannian setting. The strategy in question applies as soon as one can establish a certain a priori estimate. Hence, this gives a systematic way in which rigidity of Riemannian foliations can be obtained. On the other hand, studying infinitesimal rigidity is more tractable in the Riemannian setting because one has more tools to analyze the cohomology group $H^{1}(\mathcal{F},N\mathcal{F})$. For instance, there is a Hodge decomposition for $\Omega^{\bullet}(\mathcal{F},N\mathcal{F})$ which can be used to study the Hausdorff quotient of $H^{1}(\mathcal{F},N\mathcal{F})$, see \cite{Alvarez}. Another example, which is essential for the present paper, comes from representation theory. Suspension of a discrete group action $\Gamma\rightarrow\text{Diff}(T)$ provides a bridge between group actions and foliation theory, with isometric group actions giving rise to Riemannian foliations. Using this bridge, one can import vanishing results for group cohomology to obtain infinitesimal rigidity of the corresponding suspension foliation. To show that group cohomology vanishes, it is useful to impose that the discrete group $\Gamma$ acts by isometries because this enables one to use tools like the Peter-Weyl theorem. This is exactly how  Lubotzky-Zimmer \cite{Zimmer} obtained an important vanishing result for group cohomology which we will use in this paper.

	\vspace{0.2cm}
	
	Few examples of rigid foliations are known to date. Hamilton \cite{Hamilton} proved a rigidity result for Riemannian foliations with all leaves compact, also called Hausdorff foliations, showing that such a foliation is rigid provided that its generic leaf $L$ satisfies $H^{1}(L)=0$.  Different versions of this result have been obtained independently throughout the years \cite{langevin}, \cite{epstein},\cite{rui}. To our knowledge, these are the only examples of rigid Riemannian foliations known to date. Beyond the Riemannian case, El Kacimi Alaoui and Nicolau \cite{kacimi} established rigidity for a class of foliations obtained by suspending linear foliations on tori with suitable linear Anosov diffeomorphisms. A special case of their result was obtained before by Ghys-Sergiescu \cite{ghys}. Finally, the orbit foliation of the Weyl chamber flow was shown to be infinitesimally rigid by Kanai \cite{Kanai} and Konenko \cite{Konenko}, while rigidity was established by Katok-Spatzier \cite{Katok}. For a survey on (infinitesimal) rigidity of foliations, we refer the reader to \cite[\S\!1.5.1 and \S\!1.5.2]{Asaoka}.

	
	\vspace{0.2cm}
	
	Our paper is motivated by Hamilton's work \cite{Hamilton}. While Hamilton could only apply his method to the class of Hausdorff foliations -- i.e. Riemannian foliations with all leaves compact -- he suggests there should be more examples of rigid Riemannian foliations. We aim to make a contribution in this direction by constructing the first examples of infinitesimally rigid Riemannian foliations that are not Hausdorff. Rather than considering arbitrary Riemannian foliations, we will focus on the distinguished subclass of Lie foliations. This is justified by Molino's structure theory \cite{Molino}, which reduces the study of Riemannian foliations on compact manifolds to that of Lie foliations with dense leaves. From this point of view, Lie foliations with dense leaves are the most elementary kind of Riemannian foliations. 
	
	\begin{goal}
	Construct infinitesimally rigid Lie foliations with dense leaves on compact manifolds.
	\end{goal}
	
	In \S\ref{sec:two}, we review some background about infinitesimal rigidity and Lie foliations. Let us only recall here that a Lie foliation $\mathcal{F}$ is a foliation whose transversal structure is modeled on a Lie group. Equivalently, the tangent distribution of $\mathcal{F}$ is given by the kernel of a non-singular, Lie algebra valued one-form $\omega\in\Omega^{1}(M,\mathfrak{g})$ satisfying the Maurer-Cartan equation. The prototypical example is the foliation defined by a flat connection on a principal bundle.
	
	\vspace{0.2cm}
	
	In \S\ref{sec:three}, we initiate our study of infinitesimally rigid Lie foliations with dense leaves. Given a Lie $\mathfrak{g}$-foliation $\mathcal{F}$ with dense leaves on $M$, we look for properties that the Lie algebra $\mathfrak{g}$ and the manifold $M$ need to satisfy for $\mathcal{F}$ to be infinitesimally rigid (see Prop.~\ref{prop:obstruction} and Cor.~\ref{cor:not-amenable}). 
	
	\vspace{0.2cm}
	\noindent
	\textbf{Theorem A.} \emph{Let $\mathcal{F}$ be an infinitesimally rigid Lie $\mathfrak{g}$-foliation with dense leaves on a connected, compact manifold $M$. Then the following hold:
		\begin{itemize}
			\item $\mathfrak{g}$ is perfect,
			\item $H^1(M)$ vanishes,
			\item $\pi_1(M)$ is not amenable.
		\end{itemize}
	}
	
	\vspace{0.2cm}
	
	In \S\ref{sec:four}, we prove a result that can be used to construct a new infinitesimally rigid Lie foliation with dense leaves out of a given one. If $\mathcal{F}$ is a Lie $\mathfrak{g}$-foliation defined by $\omega\in\Omega^{1}(M,\mathfrak{g})$ and $\mathfrak{h}\subset\mathfrak{g}$ is an ideal, then dividing out the ideal $\mathfrak{h}$ gives a Lie $\mathfrak{g}/\mathfrak{h}$-foliation $\overline{\mathcal{F}}$ defined by $\overline{\omega}\in\Omega^{1}(M,\mathfrak{g}/\mathfrak{h})$. Note that the leaves of $\mathcal{F}$ are contained in the leaves of $\overline{\mathcal{F}}$. If $\mathcal{F}$ has dense leaves, then infinitesimal rigidity is preserved under this reduction procedure provided that the ideal $\mathfrak{h}$ is perfect (see Prop.~\ref{prop:spectral} and Cor.~\ref{cor:semi-simple}).
	
	\vspace{0.2cm}
	\noindent
	\textbf{Theorem B.} \emph{Let $\mathcal{F}$ be a Lie $\mathfrak{g}$-foliation with dense leaves on a manifold $M$. A choice of ideal $\mathfrak{h}\subset\mathfrak{g}$ gives a Lie  $\mathfrak{g}/\mathfrak{h}$-foliation $\overline{\mathcal{F}}$ which also has dense leaves. If $\mathcal{F}$ is infinitesimally rigid and $\mathfrak{h}$ is perfect, then $\overline{\mathcal{F}}$ is also infinitesimally rigid.
	}
	
	\vspace{0.2cm}
	\noindent
	The proof uses a version of the Serre spectral sequence for Lie algebroid cohomology recently developed in \cite{Serre}. Thm.~B allows one to pass from an infinitesimally rigid Lie foliation with dense leaves modeled on a semisimple Lie algebra to one modeled on a simple Lie algebra. 
	
	\vspace{0.2cm}
	The core of this note is \S\ref{sec:five}, where we construct infinitesimally rigid Lie foliations with dense leaves on compact manifolds. To our knowledge, these are the first examples of infinitesimally rigid Riemannian foliations that are not Hausdorff. Our examples are obtained via the suspension method, which we recall in \S\ref{sec:suspension}. We prove the following (see Thm.~\ref{thm:main-result}). 
	
	\vspace{0.2cm}
	\noindent
	\textbf{Theorem C.} \emph{Let $B$ be a connected, compact manifold such that $\pi_1(B)$ has property (T) as a discrete group. Let $G$ be a connected, compact Lie group and assume that $\varphi:\pi_1(B)\rightarrow G$ is a group homomorphism such that $\varphi(\pi_1(B))\subset G$ is dense. Then the suspension of $\varphi$ gives a Lie $\mathfrak{g}$-foliation $\mathcal{F}$ with dense leaves on a compact manifold satisfying $H^{1}(\mathcal{F},N\mathcal{F})=0$.}
	
	\vspace{0.2cm}
	\noindent
	The proof of Thm.~C relies on two auxiliary results, which we now turn to describe. The main source of difficulty in finding infinitesimally rigid foliations is the lack of general methods to compute $H^{1}(\mathcal{F},N\mathcal{F})$. Recent work by El Kacimi Alaoui \cite{Kacimi} addresses this issue by computing $H^{1}(\mathcal{F},N\mathcal{F})$ for developable foliations $\mathcal{F}$ -- these are foliations whose lift to some covering of $M$ is given by a fibration. The suspension foliation $\mathcal{F}$ of a homomorphism $\varphi:\pi_{1}(B)\rightarrow G$ is of this type. Hence, the problem is reduced to requiring that the group cohomology $H^{1}(\pi_1(B),C^{\infty}(G))$ vanishes. This in turn is achieved by applying a cohomology vanishing result due to Lubotzky-Zimmer \cite{Zimmer} which applies to isometric, ergodic actions of discrete groups that have Kazhdan's property (T). The result is Thm.~C above.
	
	\vspace{0.2cm}
	We then proceed by constructing explicit examples of Thm.~C. It follows from Thm.~A that the Lie group $G$ is necessarily semisimple, and a result by Zimmer \cite{zimmer-actions} implies that $G$ cannot admit $SO(3)$ as a quotient. It turns out that the construction in Thm.~C can be carried out whenever $G$ satisfies these assumptions, which yields the following (see Cor.~\ref{cor:existence}).
	
	\vspace{0.2cm}
	\noindent
	\textbf{Theorem D.} \emph{Let $\mathfrak{g}$ be a compact, semisimple Lie algebra with no simple factor isomorphic to $\mathfrak{so}(3)$. Then there exists an infinitesimally rigid Lie $\mathfrak{g}$-foliation $\mathcal{F}$ with dense leaves on some compact manifold $M$.}
	
	\vspace{0.2cm}
	\noindent
	We would like to stress that Thm.~D is not just an abstract existence result -- we also provide an explicit algorithm describing how these foliations can be obtained. It works as follows.
	
	\vspace{0.15cm}
	
	\emph{Input:} A compact, connected, semisimple Lie group $G$ not admitting $SO(3)$ as a quotient.
	
	\emph{Output:} A finitely presented, dense subgroup $\Gamma\subset G$ that is a discrete Kazhdan group.
	
	\vspace{0.15cm}
	\noindent 
	Since $\Gamma$ is finitely presented, one can fix a compact connected $4$-manifold $B$ with $\pi_1(B)=\Gamma$. The suspension of the inclusion $\pi_1(B)\hookrightarrow G$ is then the desired infinitesimally rigid Lie $\mathfrak{g}$-foliation. Our algorithm is a variation on a result by de Cornulier \cite{Cornulier} determining which connected Lie groups $G$ admit a finitely generated, dense subgroup that has property (T) as a discrete group. Our proof uses similar techniques. The difference is that we only work with compact semisimple Lie groups, which allow for a more concrete approach, and that the desired subgroup $\Gamma$ should be finitely presented rather than merely finitely generated.
	
	We now give a rough overview of the algorithm, ignoring some of the technical details. We first reduce to the case in which $G$ is compact, connected and simple, not locally isomorphic to $SO(3)$ and with trivial center. Then $G$ can be realized as a closed subgroup of $SL(n,\mathbb{R})$ that is defined over $\mathbb{Q}$ as an algebraic group. Fix a number field $K\subset\mathbb{R}$ of degree $3$ that is not totally real and set $\Gamma:=G(\mathcal{O}_{K})$. That is, $\Gamma$ consists of the matrices in $G$ whose entries lie in the ring of integers $\mathcal{O}_{K}\subset K$. It seems difficult a priori to check that $\Gamma$ satisfies the requirements. To facilitate this, we make use of a result by Borel-Harish-Chandra \cite{harish} which yields a diagonal embedding $(\text{Id},\sigma):\Gamma\hookrightarrow G\times G_{\mathbb{C}}$ such that $(\text{Id},\sigma)(\Gamma)\subset G\times G_{\mathbb{C}}$ is a cocompact, irreducible lattice. This enables us to use lattice theory in order to show that $\Gamma$ has the desired properties. First, the fact that $\Gamma$ is finitely presented follows from cocompactness of the lattice $(\text{Id},\sigma)(\Gamma)\subset G\times G_{\mathbb{C}}$. Second, the fact that $\Gamma\subset G$ is dense follows from irreducibility of the lattice $(\text{Id},\sigma)(\Gamma)\subset G\times G_{\mathbb{C}}$. Finally, $\Gamma$ has property (T) as a discrete group because the lattice $(\text{Id},\sigma)(\Gamma)$ inherits property (T) from $G\times G_{\mathbb{C}}$.
	
	\vspace{0.2cm}
	
	We finish the paper by working out a concrete example of Thm.~C in \S\ref{sec:example}. The example in question is an infinitesimally rigid Lie foliation with dense leaves modeled on $\mathfrak{so}(n)$ for $n\geq 5$. Its construction deviates slightly from the above algorithm, but it uses similar ideas. We like to mention it explicitly because the dense subgroup $\Gamma\subset SO(n)$ with property (T) used in its construction has appeared before in a different context. It was used in Sullivan's solution of the Ruziewicz problem for $n\geq 4$, showing that the Lebesgue measure is the only finitely additive measure defined on all Lebesgue measurable subsets of the sphere $S^{n}$ that is invariant under the action of $SO(n+1)$ and of total measure $1$, see \cite[\S~3.4]{Lubotzky}.
	
	\vspace{0.2cm}
	
	This paper raises a few interesting open questions. First, it is natural to wonder whether the infinitesimally rigid foliations we constructed in Thm.~C are in fact rigid. To establish rigidity, one could try to apply the Nash-Moser method, which requires the construction of tame homotopy operators for the deformation complex of the foliation. The main challenge will consist of finding a way to exploit the various assumptions in Thm.~C when constructing these homotopy operators. Second, the particular construction described in Thm.~C has the limitation that it only produces examples of infinitesimally rigid Lie foliations with dense leaves for compact, semisimple Lie algebras with simple ideals different from $\mathfrak{so(3)}$. It would be interesting to know if a different construction can yield examples modeled on perfect Lie algebras that are not of this type.

	\color{black}	
	\begin{ack}
		S.G. acknowledges support from the UCL Institute for Mathematical and Statistical Sciences (IMSS) and from the Institute of Mathematics of the Polish Academy of Sciences (IMPAN). F.Z. acknowledges support from UIUC and the Institute for Basic Science (IBS) in Pohang.  We would like to thank Ioan M\u{a}rcu\c{t} for useful discussions, and Aziz El Kacimi Alaoui for helpful e-mail exchanges about his paper \cite{Kacimi}.
	\end{ack}

	\section{Background on Lie foliations and infinitesimal rigidity}\label{sec:two}
	We recall some definitions concerning the main goal of this paper, namely the construction of foliations that are infinitesimally rigid. Since we will focus our attention on the class of Lie foliations, we also recall their definition and some of their properties.
	
	
	\subsection{Infinitesimal rigidity of foliations}
	Let $M$ be a compact manifold with an arbitrary foliation $\mathcal{F}$. For a vector field $Y\in\mathfrak{X}(M)$, we denote by $\overline{Y}$ the induced section of the normal bundle $N\mathcal{F}:=TM/T\mathcal{F}$. It is well-known that there is a canonical representation $\nabla$ of the Lie algebroid $T\mathcal{F}$ on the normal bundle $N\mathcal{F}$, given by
	\[
	\nabla_{X}\overline{Y}=\overline{[X,Y]},
	\]
	for $X\in\Gamma(T\mathcal{F})$ and $\overline{Y}\in\Gamma(N\mathcal{F})$. This representation $\nabla$ is called the Bott connection. It induces a differential $d_{\nabla}$ on the graded vector space $\Omega^{\bullet}(\mathcal{F},N\mathcal{F}):=\Gamma(\wedge^{\bullet}T^{*}\mathcal{F}\otimes N\mathcal{F})$ of foliated forms with coefficients in $N\mathcal{F}$, given by the usual Koszul formula
	\begin{align*}
		d_{\nabla}\eta(V_1,\ldots,V_{k+1})&=\sum_{i=1}^{k+1}(-1)^{i+1}\nabla_{V_i}\big(\eta(V_1,\ldots,V_{i-1},\widehat{V_i},V_{i+1},\ldots,V_{k+1})\big)\\
		&\hspace{0.5cm}+\sum_{i<j}(-1)^{i+j}\eta\big([V_i,V_j],V_1,\ldots,\widehat{V_i},\ldots,\widehat{V_j},\ldots,V_{k+1}\big).
	\end{align*}
	We denote the associated cohomology groups by $H^{\bullet}(\mathcal{F},N\mathcal{F})$. Heitsch \cite{Heitsch} showed that the first cohomology $H^{1}(\mathcal{F},N\mathcal{F})$ governs infinitesimal deformations of $\mathcal{F}$, in the following sense.
	
	Let $\mathcal{F}_{t}$ be a smooth path of foliations on $M$ with $\mathcal{F}_{0}=\mathcal{F}$. We identify the normal bundle $N\mathcal{F}$ with a subbundle of $TM$ that is complementary to $T\mathcal{F}$. 
	Because $M$ is compact, there exists $\epsilon>0$ such that $T\mathcal{F}_t$ remains transverse to $N\mathcal{F}$ for $0\leq t\leq\epsilon$. Hence, there exists a path $\eta_t\in\Gamma(T^{*}\mathcal{F}\otimes N\mathcal{F})$ such that for all $0\leq t\leq\epsilon$ we have
	\[
	T\mathcal{F}_t=\text{graph}(\eta_t)=\{X+\eta_t(X): X\in\Gamma(T\mathcal{F})\}.
	\]
	
	\begin{lemma}[\cite{Heitsch}]
		\label{lem:first-order-fol}
		Assuming the setup just described, we have:
		\begin{enumerate}
			\item The infinitesimal deformation $\left.\frac{d}{dt}\right|_{t=0}\eta_t$ is closed with respect to $d_{\nabla}$.
			\item If the path $\mathcal{F}_t$ is generated by an isotopy $\phi_t$, i.e. $T\mathcal{F}_t=(\phi_t)_{*}T\mathcal{F}$, then the corresponding infinitesimal deformation is exact. Indeed,
			\[
			\left.\frac{d}{dt}\right|_{t=0}\eta_t=d_{\nabla}\overline{V_0},
			\]
			where $V_t$ is the time-dependent vector field corresponding with the isotopy $\phi_t$.
		\end{enumerate}
	\end{lemma}
	
	This result justifies the following definition.
	
	\begin{defi}
		We call the foliation $\mathcal{F}$ infinitesimally rigid if $H^{1}(\mathcal{F},N\mathcal{F})$ vanishes.
	\end{defi}
	
	The aim of this note is to construct foliations on compact manifolds that are infinitesimally rigid. This is not a straightforward task, since the cohomology group $H^{1}(\mathcal{F},N\mathcal{F})$ is hard to compute due to lack of general methods. The problem becomes more manageable if one assumes that the foliation $\mathcal{F}$ is transversely parallelizable, meaning that there is a frame $\{\overline{Y_1},\ldots,\overline{Y_q}\}$ for the normal bundle $N\mathcal{F}$ consisting of sections $\overline{Y_i}$ that are flat with respect to the Bott connection $\nabla$. In that case, we get 
	\[
	H^{1}(\mathcal{F},N\mathcal{F})\cong H^{1}(\mathcal{F})\otimes\mathbb{R}^{q},
	\]
	hence the problem is reduced to computing the ordinary foliated cohomology $H^1(\mathcal{F})$. In this note, we are interested in a particular class of transversely parallelizable foliations called Lie foliations. We introduce them in the next subsection.

	\subsection{Lie foliations}
	We briefly recall the definition, as well as Fédida's structure theory for Lie foliations.  For a more detailed account, we refer to \cite[\S 4.3.1]{Moerdijk}.
	
	\begin{defi}\label{def:Lie}
		Let $M$ be a manifold and $\mathfrak{g}$ a Lie algebra. A Maurer-Cartan form with values in $\mathfrak{g}$ is a differential form $\omega\in\Omega^{1}(M,\mathfrak{g})$ satisfying the Maurer-Cartan equation
		\begin{equation}\label{eq:MC}
			d\omega+\frac{1}{2}[\omega,\omega]=0.
		\end{equation}
		Assume moreover that $\omega$ is non-singular, i.e. $\omega_x:T_{x}M\rightarrow\mathfrak{g}$ is surjective for all $x\in M$. Then  $\ker\omega$ integrates to a foliation $\mathcal{F}$, which we call the Lie $\mathfrak{g}$-foliation defined by $\omega\in\Omega^{1}(M,\mathfrak{g})$.
	\end{defi}
	
	In the above definition, $d\omega$ is defined by picking any basis of $\mathfrak{g}$ and then applying the de Rham differential to each component of $\omega$. The two-form $[\omega,\omega]\in \Omega^{2}(M,\mathfrak{g})$ is given by
	\[
	[\omega,\omega](X,Y)=2[\omega(X),\omega(Y)].
	\]
	
	\begin{remark}\label{remark: trivialization}
		It is classical that Lie foliations are transversely parallelizable \cite[Prop.~4.21]{Moerdijk}. Note that the Maurer-Cartan form $\omega\in\Omega^{1}(M,\mathfrak{g})$ induces a pointwise isomorphism
		\[
		\omega_x:N_{x}\mathcal{F}\rightarrow\mathfrak{g}.
		\]
		Picking a basis $e_1,\ldots,e_q$ of $\mathfrak{g}$, we can define sections $\overline{Y_1},\ldots,\overline{Y_q}$ of $N\mathcal{F}$ by requiring that
		\[
		\overline{Y_i}(x)=\omega_{x}^{-1}(e_i).
		\]
		These constitute a transverse parallelism for $\mathcal{F}$. In particular, for a Lie foliation $\mathcal{F}$ modeled on a $q$-dimensional Lie algebra, we have that $H^{1}(\mathcal{F},N\mathcal{F})\cong H^{1}(\mathcal{F})\otimes\mathbb{R}^{q}$.
	\end{remark}
	
	We now recall some structure theory of Lie foliations. Let $\mathcal{F}$ be a Lie foliation defined by a Maurer-Cartan form $\omega\in\Omega^{1}(M,\mathfrak{g})$ on a compact, connected manifold $M$. Denote by $G$ the unique connected and simply connected Lie group integrating $\mathfrak{g}$. Then there exist a normal covering $\pi:\widehat{M}\rightarrow M$ and a fiber bundle map $D:\widehat{M}\rightarrow G$ whose fibers coincide with the leaves of the lifted foliation $\pi^{*}\mathcal{F}$ on $\widehat{M}$. The covering space $\widehat{M}$ is called the Darboux cover of $\omega$, and $D$ is called its developing map.
	The action of $\pi_1(M)$ on $\widehat{M}$ by covering transformations defines a group homomorphism
	\begin{equation*}\label{eq:hol}
		h:\pi_1(M)\rightarrow G,
	\end{equation*}
	and the fibration $D:\widehat{M}\rightarrow G$ is equivariant with respect to $h$ in the sense that
	\[
	D(\widehat{x}\gamma)=D(\widehat{x})h(\gamma)
	\]
	for $\widehat{x}\in\widehat{M}$ and $\gamma\in\pi_1(M)$. The image $H:=h(\pi_1(M))$ is called the holonomy group of $\omega$. 
	There is a manifold structure on the set of left cosets $G/\overline{H}$ such that the projection map $G\rightarrow G/\overline{H}$ is a submersion. The developing map $D:\widehat{M}\rightarrow G$ then descends to a fibration $M\rightarrow G/\overline{H}$, which is called the basic fibration. In summary, we have the following diagram
	\begin{equation}\label{eq:diagram}
	\begin{tikzcd}
		&(\widehat{M},\pi^{*}\mathcal{F})\arrow[r,"D"]\arrow[d,"\pi"]	&G\arrow[d]\\
		&(M,\mathcal{F})\arrow[r]& G/\overline{H}
	\end{tikzcd}.
	\end{equation}
	We highlight two consequences of this diagram which will be used in what follows.
	\begin{enumerate}[i)] 
	\item The leaves of $\mathcal{F}$ are dense exactly when $H\subset G$ is a dense subgroup  \cite[Lemma~4.23]{Moerdijk}.
	\item We get a useful description for the space of basic differential forms
	\[
	\Omega^{\bullet}_{bas}(M)=\left\{\alpha\in\Omega^{\bullet}(M):\iota_V\alpha=\iota_V d\alpha=0,\ \ \forall V\in\Gamma(T\mathcal{F})\right\}.
	\]
	Indeed, a basic form $\alpha\in \Omega^{k}_{bas}(M)$ pulls back to a form $\pi^{*}\alpha\in\Omega^{k}(\widehat{M})$ that is basic for the foliation $\pi^{*}\mathcal{F}$, which is given by the fibers of the developing map $D:\widehat{M}\rightarrow G$. Since $\pi^{*}\alpha$ is moreover invariant under covering transformations, it descends to a form $\overline{\alpha}\in\Omega^{k}(G)$ that is invariant under right translations by elements of $H$. This way, we obtain an isomorphism of complexes
	\[
	\left(\Omega^{\bullet}_{bas}(M),d\right)\overset{\sim}{\longrightarrow}\left(\Omega_{H}^{\bullet}(G),d\right).
	\]
	If the leaves of $\mathcal{F}$ are dense -- i.e. $H$ is a dense subgroup of $G$ -- we obtain the complex of right invariant differential forms on $G$. This coincides with the complex of left invariant differential forms on the opposite Lie group $\overline{G}$ with multiplication $g\overline{\cdot}h =hg$. The latter is isomorphic to the Chevalley-Eilenberg complex of the opposite Lie algebra $(\mathfrak{g},-[\cdot,\cdot])$, which in turn is isomorphic to the Chevalley-Eilenberg complex of the original Lie algebra $(\mathfrak{g},[-,-])$. In particular, in case $\mathcal{F}$ has dense leaves we get that
	\begin{equation}\label{eq:bas}
	H^{1}_{bas}(M)\cong H^{1}(\mathfrak{g})=(\mathfrak{g}/[\mathfrak{g},\mathfrak{g}])^{*}.
	\end{equation}
	 We can realize this isomorphism even more concretely. First, observe that applying an element $\xi\in(\mathfrak{g}/[\mathfrak{g},\mathfrak{g}])^{*}$ to $\omega\in\Omega^{1}(M,\mathfrak{g})$ gives a one-form $\xi\circ\omega\in\Omega^{1}(M)$ that is closed because of the Maurer-Cartan equation \eqref{eq:MC}. Moreover, it is clear that $\xi\circ\omega\in\Omega^{1}_{bas}(M)$. 
	We thus obtain a linear map
	\begin{equation}\label{eq:psi}
		\Psi_{bas}:(\mathfrak{g}/[\mathfrak{g},\mathfrak{g}])^{*}\longrightarrow H^{1}_{bas}(M):\xi\mapsto [\xi\circ\omega].
	\end{equation}
	We claim that this map is an isomorphism. We already know from \eqref{eq:bas} that the dimensions of $(\mathfrak{g}/[\mathfrak{g},\mathfrak{g}])^{*}$ and $H^{1}_{bas}(M)$ agree, hence we only need to check injectivity. To do so, let $\xi\in(\mathfrak{g}/[\mathfrak{g},\mathfrak{g}])^{*}$ be such that $[\xi\circ\omega]\in H^{1}_{bas}(M)$ is trivial. Because the leaves of $\mathcal{F}$ are dense, this means that $\xi\circ\omega$ vanishes identically. Since $\omega_x:T_{x}M\rightarrow\mathfrak{g}$ is surjective for all $x\in M$, this means that $\xi=0$. Hence, the map $\Psi_{bas}$ is an isomorphism.
	\end{enumerate}
	

	\color{black}
	\section{Obstructions for infinitesimal rigidity of Lie foliations}\label{sec:three}
	From now on, we focus our attention on Lie $\mathfrak{g}$-foliations $\mathcal{F}$ with dense leaves on compact, connected manifolds $M$. We investigate which properties the Lie algebra $\mathfrak{g}$ and the manifold $M$ need to satisfy for $\mathcal{F}$ to be infinitesimally rigid. We obtain two types of obstructions: one is cohomological, while the other concerns amenability of the fundamental group $\pi_{1}(M)$.

	\subsection{A cohomological obstruction}
	The first obstruction involves the de Rham cohomology of the manifold $M$ and the Chevalley-Eilenberg cohomology of the Lie algebra $\mathfrak{g}$.

	\begin{prop}\label{prop:obstruction}
		Let $\mathcal{F}$ be a Lie $\mathfrak{g}$-foliation on a compact, connected manifold $M$ with defining Maurer-Cartan form $\omega\in\Omega^{1}(M,\mathfrak{g})$. Assume that $\mathcal{F}$ has dense leaves. If $\mathcal{F}$ is infinitesimally rigid, then $\mathfrak{g}$ is perfect and $H^1(M)$ vanishes. 
	\end{prop}
	
	Vanishing of $H^{1}(M)$ actually implies that the Lie algebra $\mathfrak{g}$ is perfect, as we explain now. First recall that for any foliation $\mathcal{F}$, we have a natural embedding $H^{1}_{bas}(M)\hookrightarrow H^{1}(M)$. See for instance \cite[Prop.~4.1]{Tondeur}. Next, we use the isomorphism $H^{1}_{bas}(M)\cong H^{1}(\mathfrak{g})$ described in \eqref{eq:psi} above, which uses the assumption that $\mathcal{F}$ is a Lie $\mathfrak{g}$-foliation with dense leaves. In the conclusion of Prop.~\ref{prop:obstruction}, we prefer to state explicitly that $\mathfrak{g}$ is perfect for the sake of clarity.
	
	\begin{proof}[Proof of Prop.~\ref{prop:obstruction}]
By the above, we only need to show that $H^{1}(M)$ vanishes. Let us denote by $d$ the dimension of $H^{1}(M)$. To show that necessarily $d=0$, we proceed in three steps.

\vspace{0.2cm}
\noindent
\underline{Step 1:} The natural embedding $H^{1}_{bas}(M)\hookrightarrow H^{1}(M)$ is in fact an isomorphism.

\vspace{0.1cm}
The proof of this fact uses the spectral sequence $(E_k,d_k)$ associated with the foliation $\mathcal{F}$, which is recalled in \cite[Chapter 4]{Tondeur}. Alternatively, this spectral sequence is a special case of the one we will use in \S\ref{sec:four}, so one can also look there for an explicit description of $(E_k,d_k)$.  Since the spectral sequence of $\mathcal{F}$ converges to the de Rham cohomology $H^{\bullet}(M)$, we have 
\begin{equation}\label{eq:sum-spectral}
H^{1}(M)\cong E^{1,0}_{\infty}\oplus E^{0,1}_{\infty}.
\end{equation}
We first study the limiting term $E^{0,1}_{\infty}$. It is well-known that $E^{0,1}_{1}\cong H^{1}(\mathcal{F})$, which vanishes because $\mathcal{F}$ is transversely parallelizable and infinitesimally rigid. Hence also $E^{0,1}_{\infty}$ vanishes. Concerning the limiting term $E^{1,0}_{\infty}$, it is well-known that $E^{1,0}_{2}\cong H^{1}_{bas}(M)$. Because the differential $d_k$ has bi-degree $(k,1-k)$, it then follows that also $E^{1,0}_{\infty}\cong H^{1}_{bas}(M)$. Hence, the isomorphism \eqref{eq:sum-spectral} just states that $H^{1}(M)$ and $H^{1}_{bas}(M)$ are isomorphic. This proves Step 1. 
	
\vspace{0.2cm}
\noindent
\underline{Step 2:} There exists a basis $[\alpha_1],\ldots,[\alpha_d]$ of $H^{1}(M)$ represented by closed basic one-forms $\alpha_1,\ldots,\alpha_{d}\in\Omega^{1}_{bas}(M)$ that are linearly independent at every point of $M$. 

\vspace{0.1cm}
Composing the isomorphism \eqref{eq:psi} with the isomorphism $H^{1}_{bas}(M)\hookrightarrow H^{1}(M)$ from Step 1, we obtain another isomorphism which we denote by
\[
\Psi:(\mathfrak{g}/[\mathfrak{g},\mathfrak{g}])^{*}\longrightarrow H^{1}(M):\xi\mapsto [\xi\circ\omega].
\]  
This isomorphism tells us how to prove the statement of Step 2. Picking linearly independent $\xi_1,\ldots,\xi_d\in(\mathfrak{g}/[\mathfrak{g},\mathfrak{g}])^{*}$, we obtain closed basic one-forms $\xi_1\circ\omega,\ldots,\xi_d\circ\omega\in\Omega^{1}_{bas}(M)$ whose cohomology classes $[\xi_1\circ\omega],\ldots,[\xi_d\circ\omega]$ form a basis of $H^{1}(M)$. It remains to check that $\xi_1\circ\omega,\ldots,\xi_d\circ\omega$ are linearly independent at every point $x\in M$. To do so, assume that
\[
c_1(\xi_1\circ\omega)_x+\ldots+c_d(\xi_d\circ\omega)_x=0
\]
for some $c_1,\ldots,c_d\in\mathbb{R}$. Clearly, this is equivalent with
\[
\left(\sum_{i=1}^{d}c_i\xi_i\right)\circ\omega_x=0.
\]
Because the map $\omega_x:T_{x}M\rightarrow\mathfrak{g}$ is surjective, this implies that $\sum_i c_i\xi_i=0$. Since we chose $\xi_1,\ldots,\xi_d$ to be linearly independent, we get that $c_1=\cdots=c_d=0$. This proves Step 2.

\vspace{0.2cm}
\noindent
\underline{Step 3:} We deduce that $d=0$ and therefore $H^{1}(M)$ vanishes.

\vspace{0.1cm}
Consider the basic one-forms $\alpha_1,\ldots,\alpha_d\in\Omega^{1}_{bas}(M)$ constructed in Step 2. Since they are closed and linearly independent at every point, they give rise to a foliation $\mathcal{G}$ defined by
\[
T\mathcal{G}=\ker(\alpha_1)\cap\cdots\cap\ker(\alpha_d).
\]
We claim that $\mathcal{G}$ is given by the fibers of a fiber bundle $M\rightarrow\mathbb{T}^{d}$. Assuming this claim for now, we show that the conclusion of Step 3 holds. Since the one-forms $\alpha_1,\ldots,\alpha_d$ are basic, we know that $T\mathcal{F}\subset T\mathcal{G}$ and therefore the leaves of $\mathcal{F}$ are contained in the fibers of $M\rightarrow\mathbb{T}^{d}$. The same then holds for the leaf closures of $\mathcal{F}$. Since $\mathcal{F}$ has dense leaves by assumption, it follows that the base $\mathbb{T}^{d}$ is zero-dimensional. Hence $d=0$ and therefore $H^{1}(M)$ vanishes.

It remains to argue that $\mathcal{G}$ is given by the fibers of a fiber bundle $M\rightarrow\mathbb{T}^{d}$. To do so, note that $\mathcal{G}$ is itself a Lie foliation modeled on the abelian Lie algebra $\mathbb{R}^{d}$. If we show that its holonomy group $H\subset\mathbb{R}^{d}$ is a full rank lattice, then the diagram \eqref{eq:diagram} implies that the leaves of $\mathcal{G}$ are given by the fibers of the basic fibration $M\rightarrow\mathbb{T}^{d}$. The holonomy homomorphism $h:\pi_{1}(M)\rightarrow\mathbb{R}^{d}$ of $\mathcal{G}$ factors through the abelianization $H_{1}(M,\mathbb{Z})$ and is simply given by
\[
h:H_{1}(M,\mathbb{Z})\rightarrow\mathbb{R}^{d}:[\sigma]\mapsto\left( \int_{\sigma}\alpha_1,\ldots,\int_{\sigma}\alpha_d\right).
\]
See for instance \cite[Ex.~4.22]{Moerdijk}. Pick generators $[\sigma_1],\ldots,[\sigma_d]$ for the free part of $H_{1}(M,\mathbb{Z})$. To show that the holonomy group $H\subset\mathbb{R}^{d}$ is a full rank lattice, we check that the rows of
\begin{equation}\label{eq:matrix}
\begin{pmatrix}
\int_{\sigma_1}\alpha_1 & \cdots & \int_{\sigma_1}\alpha_d \\
\vdots & & \vdots \\
\int_{\sigma_d}\alpha_1 & \cdots & \int_{\sigma_d}\alpha_d
\end{pmatrix}
\end{equation}
are linearly independent. But this is an immediate consequence of the de Rham isomorphism
\[
H^{1}(M)\overset{\sim}{\longrightarrow}\text{Hom}\big(H_{1}(M,\mathbb{Z}),\mathbb{R}\big):[\alpha]\mapsto\left([\sigma]\mapsto\int_{\sigma}\alpha\right).
\]
Indeed, because $[\alpha_1],\ldots,[\alpha_d]$ is a basis of $H^{1}(M)$, the de Rham isomorphism implies that the columns of the matrix \eqref{eq:matrix} are linearly independent. It then follows that also its rows are linearly independent. This confirms that the holonomy group $H\subset\mathbb{R}^{d}$ is a full rank lattice, hence $\mathcal{G}$ is given by the fibers of the basic fibration $M\rightarrow\mathbb{T}^{d}$. This finishes the proof.
\end{proof}

\color{black}	
	\begin{cor}\label{cor:semisimple}
		Let $M$ be a compact, connected manifold and $\mathcal{F}$ a Lie $\mathfrak{g}$-foliation with dense leaves. Assume that $\mathcal{F}$ is infinitesimally rigid. If $\mathfrak{g}$ is compact, then it is semisimple.
	\end{cor}
	\begin{proof}
		Because compact Lie algebras are reductive, we know that $[\mathfrak{g},\mathfrak{g}]$ is semisimple. Since $\mathfrak{g}=[\mathfrak{g},\mathfrak{g}]$ by Prop.~\ref{prop:obstruction}, it follows that $\mathfrak{g}$ is semisimple.
	\end{proof}
	
	\subsection{Non-amenability of the fundamental group}
	
	Recall that a discrete group $\Gamma$ is amenable if there is a finitely additive left invariant probability measure $\mu:\mathcal{P}(\Gamma)\rightarrow[0,1]$. Here $\mathcal{P}(\Gamma)$ is the power set of $\Gamma$. For more details on amenability, we refer to \cite{Juschenko}. Examples of amenable groups are finite, abelian and solvable groups. The standard example of a non-amenable group is the free group on two generators $\mathbb{F}_{2}$. If a discrete group $\Gamma$ is amenable, then so are the subgroups of $\Gamma$ and the quotients of $\Gamma$ by normal subgroups of $\Gamma$.
	
	\begin{cor}\label{cor:not-amenable}
		Let $M$ be a compact, connected manifold and $\mathcal{F}$ a Lie $\mathfrak{g}$-foliation on $M$ with dense leaves. If $\mathcal{F}$ is infinitesimally rigid, then $\pi_1(M)$ is not amenable.
	\end{cor}
	\begin{proof}
		Let $G$ be the connected, simply connected Lie group integrating $\mathfrak{g}$. Since $\mathfrak{g}$ is perfect by Prop.~\ref{prop:obstruction}, it follows that $G$ is not solvable. The holonomy subgroup $H=h\big(\pi_1(M)\big)\subset G$ is finitely generated and dense, therefore it contains a free subgroup $\mathbb{F}_{2}$ by \cite[Thm.~1.3]{Breuillard}. This implies that $\pi_1(M)$ is not amenable, for otherwise every subgroup of $H$ -- including this copy of $\mathbb{F}_2$ -- would be amenable. This proves the statement.
	\end{proof}
	
	\begin{remark}
		The result \cite[Thm.~1.3]{Breuillard} used in the proof above follows from an enhancement of the classical Tits alternative, see \cite[Thm.~4.5]{Breuillard} and \cite[Rem.~4.6]{Breuillard}. The Tits alternative itself actually suffices to prove Cor.~\ref{cor:not-amenable}. We give a sketch of the argument.
		
		Assume by contradiction that $\pi_1(M)$ is amenable. Then also the holonomy group $H\subset G$ is amenable. Taking its image under the adjoint representation $\text{Ad}:G\rightarrow\text{GL}(\mathfrak{g})$, it follows that $\text{Ad}(H)\subset\text{GL}(\mathfrak{g})$ is amenable. The Tits alternative \cite[Thm.~1]{Tits} says that $\text{Ad}(H)$ either has a non-abelian free subgroup or a solvable subgroup of finite index. The first option is impossible since $\text{Ad}(H)$ is amenable, hence there must exist a solvable subgroup $\Gamma\subset\text{Ad}(H)$ of finite index. Then  $H_{\Gamma}:=H\cap\text{Ad}^{-1}(\Gamma)$ is also solvable and of finite index in $H$. Taking the closure in $G$, we obtain  that $\overline{H_{\Gamma}}\subset G$ is a solvable subgroup of finite index. Since a closed subgroup of finite index is also open, it follows that $\overline{H_{\Gamma}}=G$ since $G$ is connected. In particular, $G$ is solvable. This contradicts that the Lie algebra $\mathfrak{g}$ is perfect by Prop.~\ref{prop:obstruction}.
	\end{remark}
	
	
	
	\section{Creating new infinitesimally rigid Lie foliations out of old ones}\label{sec:four}
	
	A Lie $\mathfrak{g}$-foliation $\mathcal{F}$ can always be enlarged to a Lie $\mathfrak{g}/\mathfrak{h}$-foliation $\mathcal{G}$ whenever $\mathfrak{h}\subset\mathfrak{g}$ is an ideal. We now show that, if $\mathcal{F}$ is infinitesimally rigid with dense leaves, then the same holds for its enlargement $\mathcal{G}$ provided that the ideal $\mathfrak{h}$ is perfect. Consequently, if $\mathcal{F}$ is modeled on a semisimple Lie algebra $\mathfrak{g}$, then dividing out simple ideals repeatedly yields an infinitesimally rigid Lie foliation with dense leaves modeled on a simple Lie algebra.
	
	\vspace{0.3cm}
	
	We will make use of the Serre spectral sequence for Lie algebroid cohomology recently developed in \cite{Serre}. If $\mathcal{F}$ and $\mathcal{G}$ are two foliations such that the leaves of $\mathcal{F}$ are contained in the leaves of $\mathcal{G}$, then $T\mathcal{F}$ is a wide Lie subalgebroid of $T\mathcal{G}$. This yields a spectral sequence $E_k$ converging to the foliated cohomology $H^{\bullet}(\mathcal{G})$, see \cite[Thm.~1.1]{Serre}.
	To describe it explicitly, note that $T\mathcal{F}$ has a canonical representation $\nabla$ on the normal bundle $T\mathcal{G}/T\mathcal{F}$ given by
	\[
	\nabla_{X}\overline{Y}:=\overline{[X,Y]},\hspace{1cm}X\in\Gamma(T\mathcal{F}),\overline{Y}\in\Gamma(T\mathcal{G}/T\mathcal{F}).
	\]
	In the formula above, we denoted by $\overline{Y}\in\Gamma(T\mathcal{G}/T\mathcal{F})$ the image of $Y\in\Gamma(T\mathcal{G})$ under the projection $T\mathcal{G}\rightarrow T\mathcal{G}/T\mathcal{F}$.
	Recall that $\nabla$ induces a dual representation $\nabla^{*}$ on $(T\mathcal{G}/T\mathcal{F})^{*}$, which is related to $\nabla$ by the Leibniz rule
	\[
	X\langle \xi,\overline{Y}\rangle = \langle \nabla_{X}^{*}\xi,\overline{Y}\rangle + \langle\xi,\nabla_{X}\overline{Y}\rangle,\hspace{0.5cm}\forall X\in\Gamma(T\mathcal{F}),\xi\in\Gamma(T\mathcal{G}/T\mathcal{F})^{*}, \overline{Y}\in\Gamma(T\mathcal{G}/T\mathcal{F}).
	\]
	We extend $\nabla^{*}$ to a $T\mathcal{F}$-representation on $\wedge^{p}(T\mathcal{G}/T\mathcal{F})^{*}$. The zeroth page of the spectral sequence $E_k$ is then given by
	\[
	E_0^{p,q}\cong \Omega^{q}\big(\mathcal{F},\wedge^{p}(T\mathcal{G}/T\mathcal{F})^{*}\big),
	\]
	and its differential $d_0$ coincides with the differential $d_{\nabla^{*}}$ induced by the representation $\nabla^{*}$, see \cite[Thm.~3.6]{Serre}. It follows that
	\begin{equation}\label{eq:E_1}
		E_1^{p,q}\cong H^{q}\big(\mathcal{F},\wedge^{p}(T\mathcal{G}/T\mathcal{F})^{*}\big).
	\end{equation}
	In particular, we have
	\begin{equation}\label{eq:E_1p}
		E_1^{p,0}\cong\big\{\xi\in\Gamma\big(\wedge^{p}(T\mathcal{G}/T\mathcal{F})^{*}\big):\nabla_X^{*}\xi=0\ \ \forall X\in\Gamma(T\mathcal{F})\big\}.
	\end{equation}
	To express this in more familiar terms, let us introduce the subcomplex $\Omega^{\bullet}_{\mathcal{F}}(\mathcal{G})\subset\big(\Omega^{\bullet}(\mathcal{G}),d_{\mathcal{G}}\big)$ consisting of $\mathcal{F}$-basic foliated forms on $\mathcal{G}$, i.e.
	\begin{equation}\label{eq:Omega_bas}
		\Omega^{p}_{\mathcal{F}}(\mathcal{G})=\left\{\xi\in\Omega^{p}(\mathcal{G}):\iota_X\xi=\pounds_X \xi=0\ \ \forall X\in\Gamma(T\mathcal{F})\right\}.
	\end{equation}
	Now note that for all $X\in\Gamma(T\mathcal{F}),\xi\in\Gamma(T\mathcal{G}/T\mathcal{F})^{*}$ and $\overline{Y}\in\Gamma(T\mathcal{G}/T\mathcal{F})$, we have
	\begin{align*}
		\langle \nabla_{X}^{*}\xi,\overline{Y}\rangle&=X\langle \xi,\overline{Y}\rangle-\langle\xi,\nabla_{X}\overline{Y}\rangle\\
		&=X\big(\xi(Y)\big)-\xi([X,Y])\\
		&=\big(\pounds_{X}\xi\big)(Y).
	\end{align*}
	This shows that $\nabla_{X}^{*}$ and $\pounds_X$ coincide on sections of 
	$(T\mathcal{G}/T\mathcal{F})^{*}$, which we identify with foliated forms on $\mathcal{G}$ annihilating $T\mathcal{F}$. Since $\nabla_{X}^{*}$ and $\pounds_X$ satisfy the same derivation rule with respect to the wedge product, they also coincide as operators on $\Gamma\big(\wedge^{p}(T\mathcal{G}/T\mathcal{F})^{*}\big)$. Comparing \eqref{eq:E_1p} and \eqref{eq:Omega_bas}, it is now clear that
	\[
	E_1^{p,0}\cong\Omega^{p}_{\mathcal{F}}(\mathcal{G}).
	\]
	Under this identification, the differential $d_1:E_1^{\bullet,0}\rightarrow E_1^{\bullet+1,0}$ of the first page is just the restriction of $d_{\mathcal{G}}$ to $\Omega^{\bullet}_{\mathcal{F}}(\mathcal{G})$, and therefore
	\begin{equation}\label{eq:bas-coho}
		E_2^{p,0}\cong H^{p}_{\mathcal{F}}(\mathcal{G}).
	\end{equation}
	
	\begin{prop}\label{prop:spectral}
		Let $\mathcal{F}$ be a Lie $\mathfrak{g}$-foliation on a manifold $M$ defined by a Maurer-Cartan form $\omega\in\Omega^{1}(M,\mathfrak{g})$. Let $\mathfrak{h}\subset\mathfrak{g}$ be an ideal and denote by $pr:\mathfrak{g}\rightarrow \mathfrak{g}/\mathfrak{h}$ the projection. 
		\begin{enumerate}
			\item The form $pr\circ\omega\in\Omega^{1}(M,\mathfrak{g}/\mathfrak{h})$ defines a Lie $\mathfrak{g}/\mathfrak{h}$-foliation $\mathcal{G}$.
			\item Assume that $\mathcal{F}$ is infinitesimally rigid with dense leaves. Then $H^{1}(\mathcal{G})\cong H^{1}(\mathfrak{h})$.
		\end{enumerate}
	\end{prop}
	\begin{proof}
		$(1)$ For every point $x\in M$ we have a surjective map
		\[
		pr\circ\omega_x:T_{x}M\twoheadrightarrow\mathfrak{g}\twoheadrightarrow\mathfrak{g}/\mathfrak{h}.
		\]
		Moreover, $pr\circ\omega\in\Omega^{1}(M,\mathfrak{g}/\mathfrak{h})$ satisfies the Maurer-Cartan equation because the projection $pr:\mathfrak{g}\rightarrow \mathfrak{g}/\mathfrak{h}$ is a Lie algebra homomorphism, i.e.
		\[
		0=pr\circ\left(d\omega+\frac{1}{2}[\omega,\omega]\right)=d\big(pr\circ\omega\big)+\frac{1}{2}[pr\circ\omega,pr\circ\omega].
		\]
		$(2)$ We start by examining the Serre spectral sequence $E_k$ of the Lie subalgebroid $T\mathcal{F}\subset T\mathcal{G}$. Since it converges to $H^{\bullet}(\mathcal{G})$, we have that
		\[
		H^{1}(\mathcal{G})\cong E_{\infty}^{1,0}\oplus E_{\infty}^{0,1}.
		\]
		By the isomorphism \eqref{eq:E_1}, we have that $E_1^{0,1}\cong H^{1}(\mathcal{F})$, which vanishes since $\mathcal{F}$ is assumed to be infinitesimally rigid. Hence, also the limiting term $E_{\infty}^{0,1}$ vanishes. Because the differential $d_k$ of page $E_k$ has bi-degree $(k,1-k)$, it follows that
		\[
		E_{\infty}^{1,0}=E_2^{1,0}\cong H^{1}_{\mathcal{F}}(\mathcal{G}),
		\]
		where we used the isomorphism \eqref{eq:bas-coho}. So the result follows if we show that $H^{1}_{\mathcal{F}}(\mathcal{G})\cong H^{1}(\mathfrak{h})$.
		
		We will show that the complex  $\big(\Omega^{\bullet}_{\mathcal{F}}(\mathcal{G}),d_{\mathcal{G}}\big)$ is isomorphic with the Chevalley-Eilenberg complex $\big(\wedge^{\bullet}\mathfrak{h}^{*},d_{CE}\big)$.
		To do so, recall that the space of transverse fields $\mathfrak{X}(M,\mathcal{F})/\Gamma(T\mathcal{F})$ is canonically a Lie algebra, where $\mathfrak{X}(M,\mathcal{F})$ is the space of $\mathcal{F}$-projectable vector fields. Since the leaves of $\mathcal{F}$ are dense, we have a natural Lie algebra isomorphism
		\[
		\Psi:\big(\mathfrak{g},[\cdot,\cdot]\big)\overset{\sim}{\longrightarrow}\left(\mathfrak{X}(M,\mathcal{F})/\Gamma(T\mathcal{F}),[\cdot,\cdot]\right):v\mapsto \overline{Y_v},
		\]
		where $\overline{Y_v}$ is the unique transverse field satisfying $\omega(\overline{Y_v})=v$.
		Fix a basis $\{v_1,\ldots,v_k\}$ of $\mathfrak{h}$ and let $\overline{Y_1},\ldots,\overline{Y_k}\in\mathfrak{X}(M,\mathcal{F})/\Gamma(T\mathcal{F})$ denote the corresponding transverse fields.
		It follows that the space $\mathbb{R}\overline{Y_1}\oplus\cdots\oplus\mathbb{R}\overline{Y_k}$ inherits a Lie algebra structure and that we have an isomorphism of Lie algebras
		\begin{equation}\label{eq:iso-Lie}
			\Psi:\big(\mathfrak{h},[\cdot,\cdot]\big)\overset{\sim}{\longrightarrow}\left(\mathbb{R}\overline{Y_1}\oplus\cdots\oplus\mathbb{R}\overline{Y_k},[\cdot,\cdot]\right).
		\end{equation}
		Note that any representative $Y_i$ of the class $\overline{Y_i}$ is in fact tangent to $\mathcal{G}$ because
		\[
		(pr\circ\omega)(Y_i)=pr(v_i)=0.
		\]
		It follows that $\{\overline{Y_1},\ldots,\overline{Y_k}\}$ is a frame of $T\mathcal{G}/T\mathcal{F}$. Moreover, each  $\overline{Y_i}$ is flat with respect to the connection $\nabla$ because any representative $Y_i$ of $\overline{Y_i}$ is $\mathcal{F}$-projectable. This implies that the dual frame $\{\xi_1,\ldots,\xi_k\}$ of $(T\mathcal{G}/T\mathcal{F})^{*}$ consists of sections that are flat for $\nabla^{*}$. Because the leaves of $\mathcal{F}$ are dense, the description \eqref{eq:E_1p} of $\Omega^{p}_{\mathcal{F}}(\mathcal{G})$ shows that
		\[
		\Omega^{p}_{\mathcal{F}}(\mathcal{G})\cong\bigoplus_{1\leq i_1<\cdots<i_p\leq k}\mathbb{R}~\xi_{i_1}\wedge\cdots\wedge\xi_{i_p}.
		\]
		Since 
		\[
		d_{\mathcal{G}}\xi_\alpha\big(\overline{Y_\beta},\overline{Y_\gamma}\big)=-\xi_{\alpha}[\overline{Y_\beta},\overline{Y_\gamma}],
		\]
		we obtain that the complex $\big(\Omega^{\bullet}_{\mathcal{F}}(\mathcal{G}),d_{\mathcal{G}}\big)$ is nothing but the Chevalley-Eilenberg complex of the Lie algebra $\left(\mathbb{R}\overline{Y_1}\oplus\cdots\oplus\mathbb{R}\overline{Y_k},[\cdot,\cdot]\right)$. Since the map \eqref{eq:iso-Lie} is a Lie algebra isomorphism, exterior powers of its dual give an isomorphism of Chevalley-Eilenberg complexes
		\[
		\wedge^{\bullet}\Psi^{*}:\big(\Omega^{\bullet}_{\mathcal{F}}(\mathcal{G}),d_{\mathcal{G}}\big)\overset{\sim}{\longrightarrow}\big(\wedge^{\bullet}\mathfrak{h}^{*},d_{CE}\big).
		\]
		This implies in particular that $H^{1}_{\mathcal{F}}(\mathcal{G})\cong H^{1}(\mathfrak{h})$, so the proof is finished.
	\end{proof}
	
	\begin{cor}\label{cor:semi-simple}
		Let $\mathcal{F}$ be an infinitesimally rigid Lie $\mathfrak{g}$-foliation with dense leaves. 
		\begin{enumerate}
			\item If $\mathfrak{h}\subset\mathfrak{g}$ is a perfect ideal, then $\mathcal{F}$ is contained in a Lie $\mathfrak{g}/\mathfrak{h}$-foliation $\mathcal{G}$ with dense leaves that is still infinitesimally rigid.
			\item In particular, assume that $\mathfrak{g}$ is semisimple with decomposition into simple ideals given by $\mathfrak{g}=\mathfrak{g}_1\oplus\cdots\oplus\mathfrak{g}_n$. Then there exist infinitesimally rigid Lie $\mathfrak{g}_{j}$-foliations $\mathcal{G}_j$ with dense leaves such that $\mathcal{F}\subset\mathcal{G}_{j}$ for $j=1,\ldots,n$.
		\end{enumerate}
	\end{cor}
	
	\section{Constructing infinitesimally rigid Lie foliations with dense leaves}\label{sec:five}
	This section is devoted to our main result. Using the suspension method, we construct infinitesimally rigid Lie foliations with dense leaves on compact manifolds. Our construction relies on two auxiliary results. First, we use that the deformation cohomology of a suspension foliation can be expressed in terms of the group cohomology for a suitable action of a discrete group \cite{Kacimi}. Next, we will use a vanishing result for group cohomology which applies when the discrete group acting has Kazhdan's property (T) \cite{Zimmer}. Our construction yields examples of infinitesimally rigid Lie $\mathfrak{g}$-foliations with dense leaves on compact manifolds for any compact, semisimple Lie algebra $\mathfrak{g}$ whose simple factors are different from $\mathfrak{so}(3)$.

	\subsection{Suspension foliations}\label{sec:suspension}
	The examples of infinitesimally rigid Lie foliations that we will construct are obtained by the suspension method, which we now briefly recall. Let $B$ and $T$ be compact, connected manifolds and assume that we are given a group homomorphism
	\[
	\varphi:\pi_{1}(B)\rightarrow\text{Diff}(T),
	\]
	where $\text{Diff}(T)$ denotes the diffeomorphism group of $T$. Let $\widetilde{B}$ be the universal cover of $B$, and consider the product manifold $\widetilde{B}\times T$. We get an action
	$
	\pi_1(B)\curvearrowright \widetilde{B}\times T
	$
	defined by
	\begin{equation}\label{eq:action}
		\gamma\cdot(\tilde{b},t)=\big(\gamma\star\tilde{b},\varphi(\gamma)(t)\big),
	\end{equation}
	where $\star$ denotes the natural action of $\pi_1(B)$ on $\widetilde{B}$. The action $\pi_1(B)\curvearrowright \widetilde{B}\times T$ is free and properly discontinuous, hence the quotient $B_{\varphi}:=(\widetilde{B}\times T)/\pi_1(B)$ is smooth and the quotient map is a covering map
	\[
	\pi:\widetilde{B}\times T\rightarrow B_{\varphi}.
	\] 
	The quotient manifold $B_{\varphi}$ is a fiber bundle over $B$ with fiber $T$. The foliation by horizontal slices $\widetilde{B}\times \{t\}$ on $\widetilde{B}\times T$ is invariant under the action \eqref{eq:action}, hence it descends to a foliation $\mathcal{F}$ on $B_{\varphi}$ transverse to the fibers of $B_{\varphi}\rightarrow B$. 
	
	\begin{defi}
		The foliated manifold $\big(B_{\varphi},\mathcal{F}\big)$ is the suspension of $\varphi:\pi_{1}(B)\rightarrow\text{Diff}(T)$.
	\end{defi}
	
	We will be interested in Lie foliations of suspension type -- such foliations are in particular Riemannian. The suspension of a homomorphism $\varphi:\pi_{1}(B)\rightarrow\text{Diff}(T)$ gives a Riemannian foliation exactly when $\varphi$ takes values in the isometry group $\text{Isom}(T,g)\subset\text{Diff}(T)$ of $(T,g)$ for some choice of Riemannian metric $g$, see \cite[\S~3.7]{Molino}. Since our aim is to construct Lie foliations with dense leaves, it will be useful to study the leaves of a suspension foliation in detail. We first introduce some notation which is used in Lemma \ref{lem:suspension} below. Assume that $\varphi:\pi_{1}(B)\rightarrow\text{Isom}(T,g)$ is a homomorphism. For every $t\in T$, we denote by $\text{Orb}(t)\subset T$ the $\pi_{1}(B)$-orbit through $t$ and by $\text{Stab}(t)\subset\pi_{1}(B)$ the stabilizer of $t$. We also let $K$ denote the closure of $\varphi\big(\pi_{1}(B)\big)$ in the compact Lie group $\text{Isom}(T,g)$.
	
	\begin{lemma}\label{lem:suspension}
		Let $\mathcal{F}$ be the suspension foliation of a homomorphism $\varphi:\pi_{1}(B)\rightarrow\text{Isom}(T,g)$. 
		\begin{enumerate}
			\item The leaves of $\mathcal{F}$ are of the form
			\[
			(\widetilde{B}\times \text{Orb}(t))/\pi_1(B)\cong \widetilde{B}/\text{Stab}(t)\hspace{0.5cm}\text{for}\ t\in T.
			\]
			\item Compact leaves of $\mathcal{F}$ correspond with finite $\pi_{1}(B)$-orbits in $T$.
			\item If the action $K\curvearrowright T$ is transitive, then all leaves of $\mathcal{F}$ are dense. 
		\end{enumerate}
	\end{lemma}
	\begin{proof}
		$(1)$ We have a smooth surjection
		\[
		f:\widetilde{B}\times \text{Orb}(t)\rightarrow \widetilde{B}/\text{Stab}(t):\big(\tilde{b},\varphi(\gamma)(t)\big)\mapsto [\gamma^{-1}\star\tilde{b}].
		\]
		This map is well-defined, for if $\varphi(\gamma_1)(t)=\varphi(\gamma_2)(t)$ then $\gamma_2^{-1}\gamma_1\in\text{Stab}(t)$ hence 
		$$
		[\gamma_1^{-1}\star\tilde{b}]=[\gamma_2^{-1}\gamma_1\star(\gamma^{-1}\star\tilde{b})]=[\gamma_2^{-1}\star\tilde{b}].
		$$
		The map $f$ descends to the quotient $(\widetilde{B}\times \text{Orb}(t))/\pi_1(B)$ since for $\eta\in\pi_1(B)$ we have
		\[
		f\left(\eta\cdot\big(\tilde{b},\varphi(\gamma)(t)\big)\right)=f\left(\eta\star\tilde{b},\varphi(\eta\gamma)(t)\right)=[\gamma^{-1}\eta^{-1}\star(\eta\star\tilde{b})]=[\gamma^{-1}\star\tilde{b}]=f\big(\tilde{b},\varphi(\gamma)(t)\big).
		\]
		The resulting map $\overline{f}:(\widetilde{B}\times \text{Orb}(t))/\pi_1(B)\rightarrow\widetilde{B}/\text{Stab}(t)$ is injective, for if $\big(\tilde{b}_1,\varphi(\gamma_1)(t)\big)$ and $\big(\tilde{b}_2,\varphi(\gamma_2)(t)\big)$ have the same image under $f$, then there exists $\eta\in\text{Stab}(t)$ such that
		\[
		\eta\star\gamma_1^{-1}\star\tilde{b}_1=\gamma_2^{-1}\star\tilde{b}_2
		\]
		and therefore
		\[
		\big[\big(\tilde{b}_1,\varphi(\gamma_1)(t)\big)\big]=\big[\big(\gamma_1^{-1}\star\tilde{b}_1,t\big)\big]=\big[\big(\eta\star\gamma_1^{-1}\star\tilde{b}_1,t\big)\big]=\big[\big(\gamma_2^{-1}\star\tilde{b}_2,t\big)\big]=\big[\big(\tilde{b}_2,\varphi(\gamma_2)(t)\big)\big].
		\]
		This way, we obtain a diffeomorphism
		\[
		\overline{f}:(\widetilde{B}\times \text{Orb}(t))/\pi_1(B)\rightarrow\widetilde{B}/\text{Stab}(t):\big[\big(\tilde{b},\varphi(\gamma)(t)\big)\big]\mapsto[\gamma^{-1}\star\tilde{b}]
		\]
		with inverse
		\[
		\overline{f}^{-1}:\widetilde{B}/\text{Stab}(t)\rightarrow(\widetilde{B}\times \text{Orb}(t))/\pi_1(B):\big[\tilde{b}\big]\mapsto\big[\big(\tilde{b},t\big)\big].
		\]
		
		$(2)$ For each $t\in T$, the leaf of $\mathcal{F}$ coming from $\widetilde{B}\times\{t\}$ is a covering space $p:\widetilde{B}/\text{Stab}(t)\rightarrow B$  with compact base $B$. This implies that $\widetilde{B}/\text{Stab}(t)$ is compact exactly when the cardinality of a fiber $p^{-1}(x_0)$ is finite. Since $\pi_1\big(\widetilde{B}/\text{Stab}(t)\big)=\text{Stab}(t)$, there is a bijection between the fiber $p^{-1}(x_0)$ and the coset space $\pi_1(B)/\text{Stab}(t)$. The orbit-stabilizer theorem gives a bijection between the latter and the orbit $\text{Orb}(t)$. In conclusion, the leaf of $\mathcal{F}$ coming from $\widetilde{B}\times\{t\}$ is compact exactly when the orbit $\text{Orb}(t)$ is finite.

		$(3)$ The closure of the leaf of $\mathcal{F}$ coming from $\widetilde{B}\times\{t\}$ is given by 
		\begin{equation}\label{eq:leaf-closure}
			\left(\widetilde{B}\times K\cdot \{t\}\right)/\pi_1(B),
		\end{equation}
		see \cite[\S~3.7]{Molino}. Since the action $K\curvearrowright T$ is transitive, its orbit $K\cdot \{t\}$ is all of $T$ and therefore the leaf closure \eqref{eq:leaf-closure} is all of $M$. This shows that the leaves of $\mathcal{F}$ are dense.
	\end{proof}



	
	We will consider the particular case in which the manifold $T$ is a compact Lie group $G$. Endowing $G$ with a left invariant metric $g$, the group $G$ embeds into the isometry group $\text{Isom}(G,g)$ as the subgroup of left translations. Now take a homomorphism
	$
	\varphi:\pi_1(B)\rightarrow G,
	$
	such that the image $\varphi\big(\pi_1(B)\big)\subset G$ is a dense subgroup. It turns out that the associated suspension $(B_{\varphi},\mathcal{F})$ is a Lie foliation with dense leaves, as we now show.
	
	\begin{lemma}\label{lem:Lie-fol}
		Let $G$ be a compact, connected Lie group with Lie algebra $\mathfrak{g}$. Let $B$ be a compact, connected manifold and $\varphi:\pi_1(B)\rightarrow G$ a homomorphism with dense image. The suspension of $\varphi$ gives a Lie $\mathfrak{g}$-foliation $\mathcal{F}$ with dense leaves on the compact manifold $B_{\varphi}$.
	\end{lemma}
	\begin{proof}
		 We first show that $\mathcal{F}$ is a Lie $\mathfrak{g}$-foliation, meaning that there exists a non-singular Maurer-Cartan form $\omega\in\Omega^{1}(B_{\varphi},\mathfrak{g})$ such that $T\mathcal{F}=\ker\omega$. We start by defining a $\mathfrak{g}$-valued one-form $\widetilde{\omega}$ on $\widetilde{B}\times G$ by setting
		\[
		\widetilde{\omega}_{(\tilde{b},g)}:T_{\tilde{b}}\widetilde{B}\times T_{g}G\rightarrow\mathfrak{g}:(v,w)\mapsto \big(L_{g^{-1}}\big)_{*}w.
		\]
		Here $L_{g}$ denotes the left multiplication by $g\in G$. Note that $\widetilde{\omega}$ is just the canonical flat connection on the trivial principal bundle $\widetilde{B}\times G$, see \cite[Ex.~4.13]{Moerdijk}. It follows that $\widetilde{\omega}$ satisfies the Maurer-Cartan equation \eqref{eq:MC}. Moreover, $\widetilde{\omega}$ is pointwise surjective and its kernel is the tangent distribution to the fibers of $\widetilde{B}\times G\rightarrow G$. The result follows if we can show that $\widetilde{\omega}$ descends to $B_{\varphi}$, i.e. that it is invariant under the action
		\[
		\psi_{\gamma}:\widetilde{B}\times G\rightarrow \widetilde{B}\times G:(\tilde{b},g)\mapsto \big(\gamma\star\tilde{b},\varphi(\gamma)g\big)
		\]
		for $\gamma\in\pi_1(B)$. To do so, we take $(v,w)\in T_{\tilde{b}}\widetilde{B}\times T_{g}G$ and compute
		\begin{align*}
			\left(\psi_{\gamma}^{*}\widetilde{\omega}\right)_{(\tilde{b},g)}(v,w)&=\widetilde{\omega}_{(\gamma\star\tilde{b},\varphi(\gamma)g)}\big((\psi_{\gamma})_{*}(v,w)\big)\\
			&=\big(L_{g^{-1}\varphi(\gamma)^{-1}}\big)_{*}\big((\text{pr}_{2})_{*}(\psi_{\gamma})_{*}(v,w)\big)\\
			&=\big(L_{g^{-1}\varphi(\gamma)^{-1}}\big)_{*}\big((L_{\varphi(\gamma)})_{*}(w)\big)\\
			&=\big(L_{g^{-1}}\big)_{*}w\\
			&=\widetilde{\omega}_{(\tilde{b},g)}(v,w).
		\end{align*}
		In the second equality, we denoted by $\text{pr}_{2}:\widetilde{B}\times G\rightarrow G$ the projection and we used the fact that $\text{pr}_{2}\circ\psi_{\gamma}=L_{\varphi(\gamma)}\circ\text{pr}_{2}$. This shows that $\widetilde{\omega}$ descends to $B_{\varphi}$, hence $\mathcal{F}$ is a Lie $\mathfrak{g}$-foliation.
		
		The fact that $\mathcal{F}$ has dense leaves follows from item $(3)$ of Lemma~\ref{lem:suspension}. Indeed, endowing $G$ with a left invariant metric $g$, we obtain a continuous map
		$
		L:G\rightarrow\text{Isom}(G,g):g\mapsto L_{g},
		$
		where $\text{Isom}(G,g)$ carries the compact-open topology. Denoting $H:=\varphi(\pi_1(B))\subset G$, item $(3)$ of Lemma~\ref{lem:suspension} asks us to check that $\overline{L(H)}\subset\text{Isom}(G,g)$ acts transitively on $G$. Continuity of the map $L$ implies that
		$
		L(G)=L(\overline{H})\subset \overline{L(H)}.
		$
		Since the action of $L(G)$ on $G$ is transitive, the same then holds for the action of $\overline{L(H)}$. It follows that the leaves of $\mathcal{F}$ are dense.
	\end{proof}
	
	
	In what follows, we will restrict our attention to Lie foliations $\mathcal{F}$ of the type described in Lemma~\ref{lem:Lie-fol}. We now proceed by looking for conditions under which the cohomology group $H^{1}(\mathcal{F},N\mathcal{F})$ vanishes. An important tool will be a recent result by El Kacimi Alaoui which describes $H^{1}(\mathcal{F},N\mathcal{F})$ for foliations $\mathcal{F}$ that are developable \cite{Kacimi}. We now recall this result.
	
	\subsection{Infinitesimal deformations of developable foliations}
	
	The cohomology group $H^{1}(\mathcal{F},N\mathcal{F})$ governing the deformations of a foliation $\mathcal{F}$ is  very hard to compute in general. The situation is better for developable foliations, thanks to work by El Kacimi Alaoui \cite{Kacimi}.
	
	\begin{defi}
		A foliation $\mathcal{F}$ on a connected manifold $M$ is developable if there exists a connected, normal covering $\pi:\widehat{M}\rightarrow M$ such that the pullback foliation $\widehat{\mathcal{F}}$ on $\widehat{M}$ is given by the fibers of a locally trivial fibration $D:\widehat{M}\rightarrow W$. 
	\end{defi}
	
	The deck transformation group $\Gamma$ of the covering $\pi:\widehat{M}\rightarrow M$ acts freely and properly discontinuously on $\widehat{M}$, and since the covering is normal we have that $\widehat{M}/\Gamma\cong M$. Moreover, since the action $\Gamma\curvearrowright\widehat{M}$ preserves the fibers of the developing map $D:\widehat{M}\rightarrow W$, there is an induced action $\Gamma\curvearrowright W$. In particular, the space of vector fields $\mathfrak{X}(W)$ is a $\Gamma$-module.
	
	\begin{ex}
		Given a homomorphism $\varphi:\pi_1(B)\rightarrow\text{Diff}(T)$, where $B$ and $T$ are connected compact manifolds, the suspension foliation $\mathcal{F}$ on $B_{\varphi}$ is developable. Indeed, we have a connected, normal covering 
		$
		\pi:\widetilde{B}\times T\rightarrow B_{\varphi},
		$
		and the pullback foliation $\widehat{\mathcal{F}}$ is given by the fibers of the second projection $\widetilde{B}\times T\rightarrow T$. The deck transformation group is $\Gamma:=\pi_1(B)$.
	\end{ex}
	
	For suitable developable foliations $\mathcal{F}$, one can express $H^{1}(\mathcal{F},N\mathcal{F})$ in terms of the group cohomology of the action $\Gamma\curvearrowright\mathfrak{X}(W)$. We first recall its underlying differential complex.
	
	Let $\Gamma$ be a discrete group acting on a vector space $V$. For each integer $n\geq 1$, denote by $C^{n}(\Gamma,V)$ the vector space of maps $\Gamma^{n}\rightarrow V$. By convention, $C^{0}(\Gamma,V)=V$. There is a differential $d$ on the graded vector space $C^{\bullet}(\Gamma,V)$, defined by
	\begin{align*}
		d\psi(\gamma_1,\ldots,\gamma_{n+1})&=\gamma_1\cdot\psi(\gamma_2,\ldots,\gamma_{n+1})\\
		&\hspace{0.5cm}+\sum_{i=1}^{n}(-1)^{i}\psi(\gamma_1,\ldots,\gamma_{i-1},\gamma_i\gamma_{i+1},\gamma_{i+2},\ldots,\gamma_{n+1})\\
		&\hspace{0.5cm}+(-1)^{n+1}\psi(\gamma_1,\ldots,\gamma_n).
	\end{align*}
	We denote the associated cohomology groups by $H^{\bullet}(\Gamma,V)$. Of particular interest to us is the first cohomology group $H^{1}(\Gamma,V)$, which is given by
	\begin{equation}\label{eq:group-coho}
		H^{1}(\Gamma,V)=\frac{\{\psi:\Gamma\rightarrow V:\ \psi(\gamma_1\gamma_2)=\psi(\gamma_1)+\gamma_1\cdot\psi(\gamma_2)\}}{\{\chi:\Gamma\rightarrow V:\ \chi(\gamma)=\gamma\cdot v-v\ \ \text{for some}\ v\in V\}}.
	\end{equation}
	In other words, $H^{1}(\Gamma,V)$ is the space of crossed homomorphisms $\Gamma\rightarrow V$ modulo those induced by elements of $V$. We can now state the following result from \cite{Kacimi}.
	
	\begin{prop}\label{prop:cohomology}
		Let $\mathcal{F}$ be a developable foliation on a connected manifold $M$ with normal covering $\pi:\widehat{M}\rightarrow M$, deck transformation group $\Gamma$ and developing map $D:\widehat{M}\rightarrow W$. Assume that the fiber $\widehat{L}$ of $D$ satisfies $H^{1}(\widehat{L})=0$. Then $H^{1}(\mathcal{F},N\mathcal{F})\cong H^{1}(\Gamma,\mathfrak{X}(W))$.
	\end{prop}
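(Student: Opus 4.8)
The plan is to identify both sides with the cohomology of explicit cochain complexes and to build a chain-level isomorphism between them. The key point is that passing to the cover $\pi\colon\widehat M\to M$ turns foliated forms with values in $N\mathcal F$ into $\Gamma$-equivariant objects on $\widehat M$, and the fibration structure of $\widehat{\mathcal F}$ then lets one integrate out the leaf directions.

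First I would unwind the left-hand side. Since $\widehat{\mathcal F}$ is the fiber foliation of $D\colon\widehat M\to W$, one has $N\widehat{\mathcal F}\cong D^{*}TW$, the Bott connection along $\widehat{\mathcal F}$ is the pullback of the flat ($=$ trivial) connection on the base directions, and hence the complex $\big(\Omega^{\bullet}(\widehat{\mathcal F},N\widehat{\mathcal F}),d_{\nabla}\big)$ computes, fiberwise, the de Rham cohomology of the fiber $\widehat L$ with values in the (locally constant along $\widehat L$) bundle $D^{*}TW$. Using $H^{0}(\widehat L)=\mathbb R$ and the hypothesis $H^{1}(\widehat L)=0$, a fiber-integration / Leray-type argument (or: a partition of unity plus the fact that leafwise-closed, fiberwise-degree-$0$ cochains are pulled back from $W$) shows
\[
H^{0}(\widehat{\mathcal F},N\widehat{\mathcal F})\cong\mathfrak X(W),\qquad H^{1}(\widehat{\mathcal F},N\widehat{\mathcal F})=0 .
\]
Now $\pi$ being a normal covering with deck group $\Gamma$ gives $\Omega^{\bullet}(\mathcal F,N\mathcal F)\cong\Omega^{\bullet}(\widehat{\mathcal F},N\widehat{\mathcal F})^{\Gamma}$, the $\Gamma$-invariants for the natural action lifting $\Gamma\curvearrowright\widehat M$ and $\Gamma\curvearrowright W$. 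So the left-hand side is the cohomology of the invariants of a $\Gamma$-cochain complex whose cohomology is $\mathfrak X(W)$ in degree $0$ and vanishes in degree $1$.

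The second step is the homological-algebra input: for a group $\Gamma$ acting on a cochain complex $C^{\bullet}$ of $\mathbb R$-vector spaces, there is a spectral sequence (or, more elementarily, the exact sequence coming from the double complex $C^{p}(\Gamma, C^{q})$ of group cochains with coefficients in $C^{\bullet}$) with $E_{2}^{p,q}=H^{p}\big(\Gamma,H^{q}(C^{\bullet})\big)$ converging to $H^{p+q}$ of the total complex, and the total complex computes exactly $H^{\bullet}(C^{\bullet,\Gamma})$ when the $C^{q}$ are sufficiently acyclic as $\Gamma$-modules — which here they are, being spaces of sections over the \emph{free} $\Gamma$-space $\widehat M$, hence "induced"/soft enough that $H^{p}(\Gamma,C^{q})=0$ for $p>0$. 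Feeding in $H^{0}(C^{\bullet})=\mathfrak X(W)$ and $H^{1}(C^{\bullet})=0$, the low-degree part of this spectral sequence collapses to give
\[
H^{1}\big(\Omega^{\bullet}(\mathcal F,N\mathcal F)\big)\;\cong\;H^{1}\big(\Gamma,\mathfrak X(W)\big),
\]
which is the claim. In practice I would avoid invoking the machinery abstractly and instead write down the edge maps: a crossed homomorphism $\Gamma\to\mathfrak X(W)$ produces, via a choice of $\Gamma$-equivariant splitting of $\Gamma\curvearrowright\widehat M$ (i.e. a fundamental domain / partition of unity on $\widehat M$ subordinate to the $\Gamma$-action), a $\Gamma$-invariant $d_{\nabla}$-closed foliated $1$-form; conversely a $\Gamma$-invariant $d_{\nabla}$-closed foliated $1$-form on $\widehat M$ is exact by the vanishing $H^{1}(\widehat{\mathcal F},N\widehat{\mathcal F})=0$, and the failure of its primitive to be $\Gamma$-invariant is measured by a crossed homomorphism $\Gamma\to H^{0}(\widehat{\mathcal F},N\widehat{\mathcal F})=\mathfrak X(W)$; one checks these are mutually inverse on cohomology.

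The main obstacle is the first step — proving $H^{0}(\widehat{\mathcal F},N\widehat{\mathcal F})=\mathfrak X(W)$ and $H^{1}(\widehat{\mathcal F},N\widehat{\mathcal F})=0$ in a way that is uniform along $W$ and compatible with the $\Gamma$-action. Fiberwise one is computing $H^{\bullet}(\widehat L;\mathbb R)\otimes(\text{fiber of }D^{*}TW)$, but $\widehat L$ and $W$ need not be compact, so one must be careful about which function/section spaces are used and must verify that the fiber-integration operator and the homotopy making leafwise-exactness work can be chosen globally and $\Gamma$-equivariantly; this is exactly where the normality of the covering and the properness/discreteness of $\Gamma\curvearrowright\widehat M$ get used. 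The rest is bookkeeping with the Koszul differential and standard facts about group cohomology of vector-space coefficients over $\mathbb R$.
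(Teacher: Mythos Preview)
The paper does not actually prove Proposition~\ref{prop:cohomology}: it is quoted from El Kacimi Alaoui's preprint \cite{Kacimi} (``We can now state the following result from \cite{Kacimi}''), so there is no argument in the paper to compare against. Your outline is the natural double-complex/descent argument and is essentially what one expects the proof in \cite{Kacimi} to look like: identify $\Omega^{\bullet}(\mathcal F,N\mathcal F)$ with the $\Gamma$-invariants in $\Omega^{\bullet}(\widehat{\mathcal F},N\widehat{\mathcal F})$, exploit that sections over the free $\Gamma$-space $\widehat M$ are $\Gamma$-acyclic, and use $H^{1}(\widehat L)=0$ to kill the leafwise contribution in degree one. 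The low-degree spectral-sequence computation you sketch (or, equivalently, the explicit ``primitive minus translate'' construction of the edge map) then gives the isomorphism.

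Two remarks on the places where your write-up would need real work. First, the step $H^{1}(\widehat{\mathcal F},N\widehat{\mathcal F})=0$ is genuinely the crux, as you note: the hypothesis $H^{1}(\widehat L)=0$ is a statement about a single fibre, whereas you need leafwise exactness to hold \emph{smoothly in families} over $W$. Since neither $\widehat L$ nor $W$ is assumed compact, one cannot simply invoke fibrewise Hodge theory; one has to produce a continuous (in the $C^{\infty}$ topology) right inverse to $d$ on leafwise closed $1$-forms, typically via integration from a smoothly varying basepoint section plus local triviality of $D$. Second, the $\Gamma$-acyclicity of $\Omega^{q}(\widehat{\mathcal F},N\widehat{\mathcal F})$ deserves an actual argument: the standard one uses a partition of unity on $M$ pulled back to $\widehat M$ to exhibit these modules as direct summands of coinduced ones, and this is where properness and freeness of $\Gamma\curvearrowright\widehat M$ enter. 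Neither of these is a gap in your strategy, but both are places where ``standard'' hides nontrivial analysis.
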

	
	\begin{ex}\label{ex:coh-dev}
		Let $\varphi:\pi_1(B)\rightarrow\text{Diff}(T)$ be a group homomorphism, where $B$ and $T$ are compact connected manifolds. The suspension foliation $\mathcal{F}$ on $B_{\varphi}$ lifts under $\pi:\widetilde{B}\times T\rightarrow B_{\varphi}$ to the foliation by fibers of $\widetilde{B}\times T\rightarrow T$. Since the universal cover $\widetilde{B}$ is simply connected, we have $H^{1}(\widetilde{B})=0$. Hence, Prop.~\ref{prop:cohomology} implies that 
		$
		H^{1}(\mathcal{F},N\mathcal{F})\cong H^{1}\big(\pi_1(B),\mathfrak{X}(T)\big).
		$
	\end{ex}
	
	We are particularly interested in the Lie foliations described in Lemma~\ref{lem:Lie-fol}. Let us specialize the conclusion of Prop.~\ref{prop:cohomology} to this class of foliations.
	
	\begin{cor}\label{cor:group-coho}
		Let $G$ be a compact, connected Lie group with Lie algebra $\mathfrak{g}$. Let $B$ be a compact, connected manifold and $\varphi:\pi_1(B)\rightarrow G$ any homomorphism. The deformation cohomology of the suspension foliation $\mathcal{F}$ of $\varphi$ is given by 
		\begin{equation}\label{eq:isomorphisms}
			H^{1}(\mathcal{F},N\mathcal{F})\cong H^{1}\big(\pi_1(B),\mathfrak{X}(G)\big)\cong H^{1}\big(\pi_1(B),C^{\infty}(G)\big)\otimes\mathfrak{g}.
		\end{equation}
	\end{cor}
	\begin{proof}
		The first isomorphism in \eqref{eq:isomorphisms} follows from Ex.~\ref{ex:coh-dev} above. For the second isomorphism in \eqref{eq:isomorphisms}, recall that $\mathfrak{X}(G)\cong C^{\infty}(G)\otimes\mathfrak{g}$. Since the action of $\gamma\in\pi_{1}(B)$ on $G$ is left translation by the element $\varphi(\gamma)\in G$, it is clear that the induced action $\pi_1(B)\curvearrowright\mathfrak{X}(G)$ is trivial on left invariant vector fields, i.e. elements of $\mathfrak{g}$. This implies the second isomorphism in \eqref{eq:isomorphisms}.
	\end{proof}
	
	Consequently, for Lie foliations of the type described in Lemma~\ref{lem:Lie-fol} to be infinitesimally rigid, it remains to impose conditions on the group action $\pi_1(B)\curvearrowright C^{\infty}(G)$ ensuring that the group cohomology $H^{1}\big(\pi_1(B),C^{\infty}(G)\big)$ vanishes. This is the aim of the next subsection.

	\subsection{Actions of Kazhdan groups}
	We will now impose that the fundamental group $\pi_1(B)$ has Kazhdan's property (T) as a discrete group, allowing us to invoke a vanishing result for the cohomology of such groups due to Lubotzky-Zimmer \cite{Zimmer}.
	For an extensive treatment of property (T) groups, we refer to the book \cite{Kazhdan}. Let us just recall the definition here.
	
	\begin{defi}
		Let $G$ be a topological group and $\mathcal{H}$ a complex Hilbert space. Assume that $\rho:G\rightarrow U(\mathcal{H})$ is a unitary representation.
		For a subset $Q\subset G$ and a constant $\epsilon>0$, a vector $\xi\in\mathcal{H}$ is $(Q,\epsilon)$-invariant if
		\[
		\sup_{g\in Q}\|\rho(g)\xi-\xi\|<\epsilon\|\xi\|.
		\]
	\end{defi}

	\begin{defi}
		Let $G$ be a topological group. 
		\begin{enumerate}[i)]
			\item A subset $Q\subset G$ is a Kazhdan set if there exists $\epsilon>0$ with the following property: every unitary representation $(\rho,\mathcal{H})$ of $G$ which has a $(Q,\epsilon)$-invariant vector also has a non-zero invariant vector. 
			\item The group $G$ has Kazhdan's property (T) if $G$ has a compact Kazhdan set. In this case, we also say that $G$ is a Kazhdan group.
		\end{enumerate}
	\end{defi}
	
	Compact topological groups have property (T) \cite[Prop.~1.1.5]{Kazhdan}, while $\mathbb{R}^{n}$ and $\mathbb{Z}^{n}$ do not \cite[Ex.~1.1.7]{Kazhdan}. Of special interest to us are semisimple Lie groups $G$ with property (T). A connected semisimple Lie group $G$ with Lie algebra $\mathfrak{g}$ has property (T) if and only if no simple factor of $\mathfrak{g}$ is isomorphic to $\mathfrak{so}(n,1)$ or $\mathfrak{su}(n,1)$ \cite[Thm.~3.5.4]{Kazhdan}.
	
	\vspace{0.3cm}
	
	Kazhdan's property (T) is relevant for our purpose because of the following: a countable discrete group $\Gamma$ has property (T) exactly when the first group cohomology $H^{1}(\Gamma,V)$ vanishes for every orthogonal representation of $\Gamma$ on a real Hilbert space $V$  \cite[Chapter 2]{Kazhdan}. This statement is not directly applicable in our situation because the representation space $C^{\infty}(G)$ that we encountered in Cor.~\ref{cor:group-coho} is not a Hilbert space. In more detail, the action $\pi_1(B)\curvearrowright C^{\infty}(G)$ is orthogonal if we endow $C^{\infty}(G)$ with the $L^{2}$-inner product coming from a left invariant metric on $G$. Therefore, imposing that $\pi_1(B)$ has Kazhdan's property (T) a priori only yields the vanishing of $H^{1}(\pi_1(B),L^{2}(G))$, where $L^{2}(G)$ is the Hilbert space of square integrable functions on $G$.
	To overcome this issue, we will invoke a regularity result by Lubotzky-Zimmer \cite[Thm.~4.1]{Zimmer}. We only state a particular case of their result, which holds under more general assumptions that are implied by property (T). 
	
	\begin{prop}\label{prop:zimmer}
		Let $\Gamma$ be a finitely generated discrete group acting smoothly on a compact manifold $M$. Suppose that $\Gamma$ has Kazhdan's property (T), and that the $\Gamma$-action is isometric and ergodic. Then both $H^{1}(\Gamma,\mathfrak{X}(M))$ and $H^{1}(\Gamma,C^{\infty}(M))$ vanish.
	\end{prop}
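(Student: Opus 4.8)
\medskip

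\noindent\textbf{Proof proposal.} This is \cite[Thm.~4.1]{Zimmer}; the argument I would give interpolates between the $L^{2}$-theory, where property (T) applies directly, and the smooth category, via elliptic regularity. First I would fix a $\Gamma$-invariant Riemannian metric $g$ on $M$ (available since the action is isometric) with Riemannian volume $\mu$, again $\Gamma$-invariant; ergodicity is understood with respect to $\mu$. Let $\Delta$ denote the connection Laplacian $\nabla^{*}\nabla$ on $\mathfrak{X}(M)$, and likewise the Laplace--Beltrami operator on $C^{\infty}(M)$. Since $\Gamma$ acts by isometries of $g$, the operator $\Delta$ is $\Gamma$-equivariant, hence so is the heat semigroup $e^{-t\Delta}$; consequently, for every $s\ge 0$ the Sobolev space $H^{s}\mathfrak{X}(M)$, with inner product $\langle(1+\Delta)^{s}\,\cdot\,,\,\cdot\,\rangle_{L^{2}}$, is a real Hilbert space on which $\Gamma$ acts by orthogonal transformations (because $\gamma^{*}$ commutes with $(1+\Delta)^{s}$ and preserves $\langle\,\cdot\,,\,\cdot\,\rangle_{L^{2}}$), and the same holds for the Sobolev spaces $H^{s}(M)$ of functions.

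Because $\Gamma$ has property (T), the vanishing quoted in \cite[Ch.~2]{Kazhdan} gives $H^{1}\big(\Gamma,H^{s}\mathfrak{X}(M)\big)=0$ for all $s\ge 0$. So a smooth cocycle $\psi\colon\Gamma\to\mathfrak{X}(M)$, regarded for each $s$ as a cocycle into $H^{s}\mathfrak{X}(M)$, has a primitive $v_{s}\in H^{s}\mathfrak{X}(M)$; since any two primitives differ by a $\Gamma$-invariant element, $v_{s}-v_{0}\in\big(L^{2}\mathfrak{X}(M)\big)^{\Gamma}$ for every $s$. The proof then reduces to a single regularity claim: \emph{every $\Gamma$-invariant $v\in L^{2}\mathfrak{X}(M)$ is smooth.} Granting it, $v_{s}-v_{0}$ is smooth, hence $v_{0}=v_{s}-(v_{s}-v_{0})\in H^{s}\mathfrak{X}(M)$ for all $s$, and Sobolev embedding forces $v_{0}\in\mathfrak{X}(M)$; then $\psi= dv_{0}$ exhibits $\psi$ as a coboundary in $C^{\bullet}(\Gamma,\mathfrak{X}(M))$, i.e.\ $H^{1}(\Gamma,\mathfrak{X}(M))=0$. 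The same scheme handles $H^{1}(\Gamma,C^{\infty}(M))$, and there the regularity claim is immediate, since a $\Gamma$-invariant $L^{2}$-function is $\mu$-a.e.\ constant by ergodicity.

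To establish the regularity claim I would argue in two steps. (i) The invariant subspace $V:=\big(L^{2}\mathfrak{X}(M)\big)^{\Gamma}$ is finite-dimensional: for $v,w\in V$ the function $x\mapsto g_{x}(v(x),w(x))$ is $\Gamma$-invariant, hence $\mu$-a.e.\ constant by ergodicity, and integrating against $\mu$ identifies the constant with $\langle v,w\rangle_{L^{2}}/\mu(M)$; an $L^{2}$-orthonormal family in $V$ would then produce, at almost every $x$, an orthogonal family of the same size in $T_{x}M$, forcing $\dim V\le\dim M$. (ii) $e^{-t\Delta}$ preserves $V$ (being $\Gamma$-equivariant) and restricts there to a $C_{0}$-semigroup on a finite-dimensional space, hence equals $e^{-tA}$ for some $A\in\mathrm{End}(V)$; differentiating at $t=0$ shows each $v\in V$ lies in the domain of $\Delta$ with $\Delta v=Av$, so $\Delta^{k}v=A^{k}v\in V\subset L^{2}$ for all $k$, and elliptic regularity then gives $v\in\bigcap_{k}H^{2k}\mathfrak{X}(M)=\mathfrak{X}(M)$.

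I expect step (ii) — upgrading the $L^{2}$-primitive to a smooth one — to be the only non-formal point: the use of property (T) is soft (applied uniformly over Sobolev levels), whereas the regularity argument genuinely needs the ergodicity-driven observation that invariant fields have pointwise-constant norm and hence span a finite-dimensional space, after which the heat-flow bootstrap is routine. The rest is bookkeeping: checking that the chosen Laplacian really is $\Gamma$-equivariant and elliptic on $\mathfrak{X}(M)$ (the connection Laplacian of the invariant metric works), and that the bar-resolution coboundary operator is compatible with passing between $C^{\infty}$, the spaces $H^{s}$, and $L^{2}$.
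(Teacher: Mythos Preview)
The paper does not actually prove this proposition; it quotes it as a special case of \cite[Thm.~4.1]{Zimmer} and gives only the one-sentence heuristic in the Introduction (a smooth cocycle has an $L^{2}$-primitive by property (T), and ergodicity lets one upgrade that primitive to a smooth one). Your proposal is a correct and complete unpacking of exactly that Lubotzky--Zimmer argument: the Delorme--Guichardet vanishing at every Sobolev level, the observation that two primitives differ by a $\Gamma$-invariant vector, the ergodicity-driven bound $\dim\big(L^{2}\mathfrak{X}(M)\big)^{\Gamma}\le\dim M$, and the heat-flow/elliptic bootstrap showing invariant $L^{2}$-sections are smooth. There is nothing further to compare; you have supplied the proof the paper only cites.
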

	
	In the above, say that the $\Gamma$-action is isometric with respect to the Riemannian metric $g$ on $M$. Denoting by $\mu$ the Riemannian measure coming from $g$, it follows that $\Gamma$ acts by measure preserving automorphisms of $(M,\mu)$. By rescaling the metric $g$, we can ensure that $\mu$ is a probability measure. The action $\Gamma\curvearrowright(M,\mu)$ is called ergodic if for every $\Gamma$-invariant measurable subset $N\subset M$, one either has $\mu(N)=0$ or $\mu(N)=1$. Recall that $N\subset M$ is $\Gamma$-invariant if $\mu\big((\gamma\cdot N)\Delta N\big)=0$ for all $\gamma\in\Gamma$, where $\Delta$ denotes the symmetric difference. 
	
	\subsection{The main result}
	Now all preliminaries are in place to prove the main result. It yields infinitesimally rigid Lie foliations with dense leaves, of the type described in Lemma~\ref{lem:Lie-fol}.
	
	\begin{thm}\label{thm:main-result}
		Let $B$ be a connected, compact manifold such that $\pi_1(B)$ has property (T) as a discrete group. Let $G$ be a connected, compact Lie group and assume that $\varphi:\pi_1(B)\rightarrow G$ is a group homomorphism such that $\varphi(\pi_1(B))\subset G$ is dense. Then the suspension of $\varphi$ gives a Lie $\mathfrak{g}$-foliation $\mathcal{F}$ with dense leaves on a compact manifold, such that $H^{1}(\mathcal{F},N\mathcal{F})=0$.
	\end{thm}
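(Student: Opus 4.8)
The plan is to assemble the pieces already set up in the excerpt. The foliation $\mathcal{F}$ is the suspension of $\varphi:\pi_1(B)\rightarrow G$, with $G$ a compact connected Lie group endowed with a left invariant metric and $H:=\varphi(\pi_1(B))$ dense in $G$. By the discussion in \S\ref{sec:suspension}, the associated suspension $(B_{\varphi},\mathcal{F})$ is a Lie $G$-foliation with dense leaves on the compact manifold $B_{\varphi}$; this part is done. It remains to prove $H^{1}(\mathcal{F},N\mathcal{F})=0$, after which Thm.~\ref{thm:ours} immediately yields stability, since $B_{\varphi}$ is compact and $\mathcal{F}$ is Riemannian.

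First I would invoke Example~\ref{ex:coh-dev}: since $\widehat{B}$ is simply connected, Prop.~\ref{prop:cohomology} applies with developing map $\widehat{B}\times G\rightarrow G$ and fiber $\widehat{L}=\widehat{B}$, giving the isomorphism \eqref{eq:coho}, namely $H^{1}(\mathcal{F},N\mathcal{F})\cong H^{1}\big(\pi_1(B),C^{\infty}(G)\big)\otimes\mathfrak{g}$. So it suffices to show $H^{1}\big(\pi_1(B),C^{\infty}(G)\big)=0$, where $\pi_1(B)$ acts on $C^{\infty}(G)$ by left translations through $\varphi$. Next I would verify the hypotheses of Prop.~\ref{prop:zimmer} with $\Gamma=\pi_1(B)$ acting on the compact manifold $M=G$: the group $\pi_1(B)$ is finitely generated (being the fundamental group of a compact manifold) and has property (T) by assumption; the action by left translations is isometric for the chosen left invariant metric on $G$; and it is ergodic because $H=\varphi(\pi_1(B))$ is dense in $G$ — indeed, a $\pi_1(B)$-invariant measurable set is $H$-invariant, hence (by density and continuity of translation on $L^{2}$, or by the fact that the $L^{2}$-closure of $H$-invariant functions is $G$-invariant) invariant under all of $G$, so it has Haar measure $0$ or $1$. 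Prop.~\ref{prop:zimmer} then gives $H^{1}\big(\pi_1(B),C^{\infty}(G)\big)=0$ (and also $H^{1}\big(\pi_1(B),\mathfrak{X}(G)\big)=0$ directly, which is an equivalent route via the first isomorphism in \eqref{eq:coho}). Combining with \eqref{eq:coho} yields $H^{1}(\mathcal{F},N\mathcal{F})=0$, and Thm.~\ref{thm:ours} finishes the proof.

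The genuinely substantive inputs — Prop.~\ref{prop:cohomology} and Prop.~\ref{prop:zimmer} — are quoted black boxes, so the main thing requiring care is checking their hypotheses cleanly, and the subtlest of these is ergodicity of the $\pi_1(B)$-action on $G$. The point worth spelling out is precisely why density of $H$ upgrades $H$-invariance of a measurable set to $G$-invariance: the indicator function of a $\pi_1(B)$-invariant measurable set $N$ lies in $L^2(G)$ and is fixed by the left regular representation restricted to $H$; since $g\mapsto \rho(g)\mathbf{1}_N$ is continuous $G\to L^2(G)$ and $H$ is dense, it is fixed by all of $G$, forcing $\mathbf{1}_N$ to be a.e.\ constant, i.e.\ $\mu(N)\in\{0,1\}$. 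The finite generation of $\pi_1(B)$ and the isometric nature of the action are essentially immediate; finite generation in fact follows a fortiori from property (T), which is known to imply finite generation for discrete groups. I do not anticipate any obstruction beyond bookkeeping once these verifications are recorded.
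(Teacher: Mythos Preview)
Your proposal is correct and follows essentially the same route as the paper: reduce $H^{1}(\mathcal{F},N\mathcal{F})$ to group cohomology via Example~\ref{ex:coh-dev}, then verify the hypotheses of Prop.~\ref{prop:zimmer} (finite generation, property (T), isometry, ergodicity) and conclude with Thm.~\ref{thm:ours}. The only cosmetic difference is that the paper cites \cite[Lemma 2.4.1]{Popa} for ergodicity of a dense subgroup acting by translations on a compact group, whereas you supply the standard $L^{2}$-continuity argument directly; both are fine.
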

	\begin{proof}
		We already showed in Lemma~\ref{lem:Lie-fol} that $\mathcal{F}$ is a Lie $\mathfrak{g}$-foliation with dense leaves. By Cor.~\ref{cor:group-coho}, we know that
		\begin{equation}\label{eq:must-vanish}
			H^{1}(\mathcal{F},N\mathcal{F})\cong H^{1}\big(\pi_1(B),\mathfrak{X}(G)\big)\cong H^{1}\big(\pi_1(B),C^{\infty}(G)\big)\otimes\mathfrak{g}.
		\end{equation}
		We now check that the assumptions of Prop.~\ref{prop:zimmer} are satisfied. Endowing $G$ with a left invariant metric, it is clear that the action of $\pi_1(B)$ on $G$ is isometric. The metric defines a left invariant volume form and therefore a Haar measure $\mu$ on $G$. By rescaling the metric, we can make sure that the Haar measure $\mu$ is normalized.
		It is well-known that if $\Gamma$ is a dense subgroup of a compact Lie group $G$, then the action by left translations $\Gamma\curvearrowright G$ is ergodic with respect to the normalized Haar measure $\mu$ on $G$ \cite[Lemma 2.4.1]{Popa}. Since by assumption $\varphi(\pi_1(B))\subset G$ is dense, it follows that the action $\pi_1(B)\curvearrowright (G,\mu)$ is ergodic. 
		At last, the fundamental group $\pi_1(B)$ of a compact manifold $B$ is finitely generated. So all assumptions of Prop.~\ref{prop:zimmer} are satisfied and therefore the cohomology group \eqref{eq:must-vanish} vanishes. 
	\end{proof}
	
	\begin{remark}
		We check by hand that the Lie foliations constructed in Thm.~\ref{thm:main-result} indeed satisfy the conclusion of Prop.~\ref{prop:obstruction} and Cor.~\ref{cor:not-amenable}. Below, we consider the suspension $(B_{\varphi},\mathcal{F})$ of a homomorphism $\varphi:\pi_1(B)\rightarrow G$, where $\pi_1(B)$ has Kazhdan's property (T) as a discrete group, $G$ is a connected compact Lie group and $\varphi(\pi_1(B))\subset G$ is a dense subgroup.
		
		\vspace{0.2cm}
		
		$1)$ We first check that the Lie algebra $\mathfrak{g}$ is perfect. To do so, note that also $\varphi(\pi_1(B))$ has property (T) as a discrete group, because of \cite[Thm.~1.3.4]{Kazhdan}. So $G$ has a dense, finitely generated subgroup $\varphi(\pi_1(B))$ with property (T) as a discrete group, hence $\overline{[G,G]}=G$ by \cite[Thm.~3]{Cornulier}. However, since $G$ is compact and connected, the commutator subgroup $[G,G]\subset G$ is a closed Lie subgroup with Lie algebra $[\mathfrak{g},\mathfrak{g}]$, see \cite[Thm.~5.21]{Sepanski}. So we have in fact that $[G,G]=G$, which at the infinitesimal level gives $[\mathfrak{g},\mathfrak{g}]=\mathfrak{g}$.

		$2)$ We now check that $H^{1}(B_{\varphi})$ vanishes. Recall that we have a covering $\widetilde{B}\times G\rightarrow B_{\varphi}$ with deck transformation group $\pi_1(B)$, see \S\ref{sec:suspension}. Its Cartan-Leray spectral sequence $(E_k,d_k)$ converges to $H^{\bullet}(B_{\varphi})$ and the $E_2$-term is given by \cite[Thm.~$8^{bis}.9$]{Spectral}
		\[
		E_2^{p,q}=H^{p}\left(\pi_1(B),H^{q}(\widetilde{B}\times G)\right).
		\]
		In particular, we get
		\[
		H^{1}(B_{\varphi})\cong E_{\infty}^{1,0}\oplus E_{\infty}^{0,1}.
		\]
		To study the term $E_{\infty}^{0,1}$, recall that
		$$
		E_{2}^{0,1}=H^{0}\left(\pi_1(B),H^{1}(G)\right)=H^{1}(G)^{\pi_1(B)},
		$$
		which is the space of invariants of the action $\pi_1(B)\curvearrowright H^{1}(G)$. Recall that the action of $\gamma\in\pi_1(B)$ on $G$ is left translation by the element $\varphi(\gamma)\in G$. Since $G$ is a compact and connected Lie group, every cohomology class in $H^{1}(G)$ can be represented by a left invariant closed one-form. It follows that the action $\pi_1(B)\curvearrowright H^{1}(G)$ is trivial, hence
		\[
		E_{2}^{0,1}=H^{1}(G)\cong H^{1}(\mathfrak{g})=0,
		\]
		where we used that $\mathfrak{g}$ is perfect. Hence, also the limiting term $E_{\infty}^{0,1}$ vanishes. As for the other limiting term $E_{\infty}^{1,0}$, the fact that the differential $d_k$ has bidegree $(k,1-k)$ implies that
		\[
		E_{\infty}^{1,0}=E_2^{1,0}=H^{1}\left(\pi_1(B),\mathbb{R}\right).
		\]
		Since the action $\pi_1(B)\curvearrowright \mathbb{R}$ is trivial, the first group cohomology $H^{1}\left(\pi_1(B),\mathbb{R}\right)$ reduces to the space of group homomorphisms $\pi_1(B)\rightarrow(\mathbb{R},+)$, see \eqref{eq:group-coho}. Such a homomorphism factors through the abelianization $\pi_1(B)/[\pi_1(B),\pi_1(B)]$, which is finite since  $\pi_1(B)$ has property (T) as a discrete group \cite[Cor.~1.3.6]{Kazhdan}. This implies that the only homomorphism $\pi_1(B)\rightarrow(\mathbb{R},+)$ is the zero map, showing that also $E_{\infty}^{1,0}$ vanishes. This confirms that
		\[
		H^{1}(B_{\varphi})\cong E_{\infty}^{1,0}\oplus E_{\infty}^{0,1}=0.
		\]
		
		$(3)$ We check that $\pi_1(B_{\varphi})$ is not amenable. By contradiction, assume that $\pi_1(B_{\varphi})$ were amenable. Consider the homotopy long exact sequence of the fiber bundle $G\rightarrow B_{\varphi}\rightarrow B$, a part of which looks like
		\[
		\cdots\rightarrow\pi_1(B_{\varphi})\rightarrow\pi_1(B)\rightarrow\pi_{0}(G).
		\]
		Since $G$ is connected, exactness implies that the morphism $\pi_1(B_{\varphi})\rightarrow\pi_1(B)$ is surjective. Since amenability is inherited by quotient groups, it follows that $\pi_1(B)$ is also amenable. However, discrete groups that are amenable and have property (T) are necessarily finite by \cite[Thm.~1.1.6]{Kazhdan}. Hence $\pi_1(B)$ is finite, which contradicts that $\varphi(\pi_1(B))\subset G$ is dense.
	\end{remark}
	
	\subsection{Existence of examples}
	It is not clear a priori whether Thm.~\ref{thm:main-result} admits any examples. In what follows, we show that there are lots of examples and we completely determine the Lie groups $G$ that may occur in Thm.~\ref{thm:main-result}. Note that because of Cor.~\ref{cor:semisimple}, we know that $G$ is necessarily semisimple. Since every finitely presented group is the fundamental group of some compact, connected manifold, we are led to answering the following question.
	
	\begin{question}
		Let $G$ be a connected, compact, semisimple Lie group. Is there a finitely presented discrete Kazhdan group $\Gamma$ admitting a homomorphism $\varphi:\Gamma\rightarrow G$ with dense image?
	\end{question}
	
	The answer is provided by the following result.
	
	\begin{thm}\label{thm:no-so3}
		Let $G$ be connected, compact and semisimple. The following are equivalent:
		\begin{enumerate}
			\item There is a finitely presented discrete Kazhdan group $\Gamma$ with a group homomorphism $\varphi:\Gamma\rightarrow G$ such that $\varphi(\Gamma)\subset G$ is dense.
			\item $SO(3)$ is not a quotient of $G$.
		\end{enumerate}
	\end{thm}

	The rest of this subsection is devoted to the proof of Thm.~\ref{thm:no-so3}. The main ingredient is a result by Margulis \cite[Chapter~III, Proof of Prop.~7]{Margulis}, who constructed dense subgroups with property (T) in any compact, connected, simple Lie group not locally isomorphic to $SO(3)$. More generally, de Cornulier determined in \cite{Cornulier} which connected Lie groups $G$ admit a finitely generated, dense subgroup that has Kazhdan's property (T) as a discrete group. It turns out that this question is in fact equivalent to the one we posed above.

	\begin{lemma}\label{lem:aux}
		For any Lie group $G$, the following are equivalent.
		\begin{enumerate}
			\item There is a finitely presented discrete Kazhdan group $\Gamma$ with a group homomorphism $\varphi:\Gamma\rightarrow G$ such that $\varphi(\Gamma)\subset G$ is dense.
			\item There is a finitely generated, dense subgroup $H\subset G$ with Kazhdan's property (T) as a discrete group.
		\end{enumerate}
	\end{lemma}
	\begin{proof}
		For $(1)\Rightarrow (2)$, we set $H:=\varphi(\Gamma)$. It is finitely generated, being a quotient of the finitely generated group $\Gamma$. It has property (T) as a discrete group by \cite[Thm.~1.3.4]{Kazhdan}. 
		
		For $(2)\Rightarrow (1)$, we proceed as follows. Since $H$ is a discrete group with property (T), we can take a finitely presented discrete group $\Gamma$ with property (T) admitting a normal subgroup $N\subset \Gamma$ such that $\Gamma/N\cong H$, see \cite[Thm.~3.4.5]{Kazhdan}. We then set $\varphi:\Gamma\rightarrow G$ to be the composition of the projection
		$
		\Gamma\twoheadrightarrow\Gamma/N\cong H
		$
		with the inclusion $H\hookrightarrow G$.
	\end{proof}
	
	
	
	\begin{proof}[Proof of Thm.~\ref{thm:no-so3}]
		First assume that $(1)$ holds, i.e. $\Gamma$ is a finitely presented discrete Kazhdan group and $\varphi:\Gamma\rightarrow G$ is a homomorphism with dense image. Assume by contradiction that $\psi:G\rightarrow SO(3)$ is a surjective Lie group homomorphism. Then $\psi\circ\varphi:\Gamma\rightarrow SO(3)$ is a homomorphism with dense image. This contradicts a result by Zimmer \cite[Thm.~7]{zimmer-actions} stating that any homomorphism from the discrete Kazhdan group $\Gamma$ to $SO(3)$ has finite image.
		
		Conversely, assume that $SO(3)$ is not a quotient of $G$. Denote by $p:\widetilde{G}\rightarrow G$ the simply connected covering of $G$. Then $\widetilde{G}=H_1\times\cdots\times H_k$ is a product of connected, compact, simple Lie groups $H_i$. None of them is locally isomorphic to $SO(3)$ because otherwise $SO(3)$ would be a quotient of $G$. The result \cite[Chapter~III, Proof of Prop.~7]{Margulis} by Margulis yields finitely generated, dense subgroups $\Gamma_i\subset H_i$ that have property (T) as a discrete group. It follows that $p(\Gamma_1\times\cdots\times\Gamma_k)\subset G$ is a finitely generated, dense subgroup with property (T) as a discrete group. By Lemma~\ref{lem:aux}, this implies that $(1)$ holds.
	\end{proof}
	\color{black}
	
	We would like to rephrase Thm.~\ref{thm:no-so3} in terms of the Lie algebra $\mathfrak{g}$, since Lie foliations are modeled on Lie algebras according to Def.~\ref{def:Lie}. To do so, we have the following result.
	
	\begin{lemma}\label{lem:rephrase}
		Let $G$ be a compact, connected, semisimple Lie group. Its Lie algebra $\mathfrak{g}$ decomposes into simple ideals $\mathfrak{g}=\mathfrak{g}_1\oplus\cdots\oplus\mathfrak{g}_k$. The following are equivalent:
		\begin{enumerate}
			\item There is no surjective Lie group homomorphism $G\rightarrow SO(3)$.
			\item No simple factor $\mathfrak{g}_{j}$ is isomorphic to $\mathfrak{so}(3)$.
		\end{enumerate}
	\end{lemma}
	\begin{proof}
		First assume that there is a surjective Lie group homomorphism $\varphi:G\rightarrow SO(3)$. Then there is an ideal $\mathfrak{h}\subset\mathfrak{g}$, coming from the closed normal subgroup $\ker\varphi\subset G$, such that $\mathfrak{g}/\mathfrak{h}\cong\mathfrak{so}(3)$. Necessarily, $\mathfrak{h}$ is of the form $\mathfrak{h}=\bigoplus_{i\in I}\mathfrak{g}_{i}$ for some subset $I\subset\{1,\ldots,k\}$. It then follows that one of the simple factors $\mathfrak{g}_i$ must be isomorphic to $\mathfrak{so}(3)$.
		
		Conversely, assume that $\mathfrak{so}(3)$ is a simple ideal in $\mathfrak{g}$. Take a complementary ideal $\mathfrak{p}\subset\mathfrak{g}$ such that $\mathfrak{g}=\mathfrak{p}\oplus\mathfrak{so}(3)$ as Lie algebras. If $P$ is the simply connected Lie group integrating $\mathfrak{p}$, then $\widetilde{G}:=P\times SU(2)$ is the simply connected Lie group integrating $\mathfrak{g}$. We get a surjective Lie group homomorphism
		$
		\varphi:\widetilde{G}\rightarrow SO(3),
		$
		which is the composition of the projection $\widetilde{G}\rightarrow SU(2)$ and the covering map $SU(2)\rightarrow SO(3)$. It remains to show that $\varphi$ descends to a Lie group homomorphism $\overline{\varphi}:G\rightarrow SO(3)$. Since $G$ is the quotient of $\widetilde{G}$ by a discrete central subgroup $\Gamma\subset Z\big(\widetilde{G}\big)$, it suffices to show that $\varphi\big(Z\big(\widetilde{G}\big)\big)=I$. This in turn will follow if we show that $\varphi\big(Z\big(\widetilde{G}\big)\big)\subset Z\big(SO(3)\big)$, since the center $Z\big(SO(3)\big)$ is $\{I\}$. To do so, take $\widetilde{a}\in Z\big(\widetilde{G}\big)$ and $b\in SO(3)$. Since $\varphi:\widetilde{G}\rightarrow SO(3)$ is surjective, we can take $\widetilde{b}\in\widetilde{G}$ such that $\varphi(\widetilde{b})=b$. It then follows that
		\[
		\varphi(\widetilde{a})b=\varphi(\widetilde{a}\widetilde{b})=\varphi(\widetilde{b}\widetilde{a})=b\varphi(\widetilde{a}),
		\]
		showing that $\varphi(\widetilde{a})\in Z\big(SO(3)\big)$. This confirms that $\varphi\big(Z\big(\widetilde{G}\big)\big)\subset Z\big(SO(3)\big)$. So $\varphi$ induces a surjective Lie group homomorphism $\overline{\varphi}:G\rightarrow SO(3)$. This finishes the proof.
	\end{proof}
	
	\begin{cor}\label{cor:existence}
		Let $\mathfrak{g}$ be a compact, semisimple Lie algebra with no simple factor isomorphic to $\mathfrak{so}(3)$. Then there exists an infinitesimally rigid Lie $\mathfrak{g}$-foliation $\mathcal{F}$ with dense leaves on some compact manifold $M$.
	\end{cor}
	
\subsection{An algorithm}
	We stress that Cor.~\ref{cor:existence} is not just an abstract existence result. Below, we give an explicit algorithm describing how the foliations in Cor.~\ref{cor:existence} can be constructed. 
	
	\vspace{0.2cm}
	
	\emph{Input:} A compact, connected, semisimple Lie group $G$ not admitting $SO(3)$ as a quotient.
	
	\emph{Output:} A finitely presented, dense subgroup $\Gamma\subset G$ that is a discrete Kazhdan group.
	
	\vspace{0.2cm}
	
	Instead of using Margulis' result \cite[Chapter~III, Prop.~7]{Margulis} as in the proof of Thm.~\ref{thm:no-so3}, our approach is based on the paper \cite{Cornulier}. This way, we can avoid the use of $p$-adic numbers and the theory of adèles. As a bonus, it is immediately clear that the obtained subgroup $\Gamma\subset G$ is not only finitely generated, but even finitely presented. So we can pick a compact, connected $4$-manifold $B$ with $\pi_1(B)=\Gamma$. The suspension foliation of the inclusion $\pi_1(B)\hookrightarrow G$ is an infinitesimally rigid Lie $\mathfrak{g}$-foliation with dense leaves whose existence is claimed in Cor.~\ref{cor:existence}.
	
	\vspace{0.2cm}
	\underline{\emph{Step 1:}} We may assume that $G$ is simple.
	
	Indeed, let $p:\widetilde{G}\rightarrow G$ be the simply connected covering of $G$. Then $\widetilde{G}=H_1\times\cdots\times H_k$ is a product of compact, connected, simple Lie groups $H_i$. If we have subgroups $\Gamma_i\subset H_i$ that are finitely presented, dense and have property (T) as discrete groups, then the same holds for the subgroup $p\left(\Gamma_1\times\cdots\times\Gamma_k\right)\subset G$. Let us check this in detail:
	\begin{itemize}
		\item Clearly, we have that $\Gamma_1\times\cdots\times\Gamma_k$ is dense in $H_1\times\cdots\times H_k$. Since $p:\widetilde{G}\rightarrow G$ is continuous and surjective, it follows that $p\left(\Gamma_1\times\cdots\times\Gamma_k\right)$ is dense in $G$.
		\item The product $\Gamma_1\times\cdots\times\Gamma_k$ has property (T) by \cite[Prop.~1.7.8]{Kazhdan}. It follows from \cite[Thm.~1.3.4]{Kazhdan} that also $p\left(\Gamma_1\times\cdots\times\Gamma_k\right)$ has property (T).
		\item By \cite[Chapter~4, Prop.~4]{groups}, the product $\Gamma_1\times\cdots\times\Gamma_k$ has a finite presentation $\langle S | R\rangle$. Because $\ker(p)\subset\widetilde{G}$ is closed and discrete, it must be finite since $\widetilde{G}$ is compact. Hence also the set $\ker\left(p|_{\Gamma_1\times\cdots\times\Gamma_k}\right)$ is finite. For every $g\in\ker\left(p|_{\Gamma_1\times\cdots\times\Gamma_k}\right)$, we pick a word $W_g$ in the free group $F(S)$ that represents $g$. It follows that
		\[
		\big\langle S| R\cup\left\{W_g:g\in\ker\left(p|_{\Gamma_1\times\cdots\times\Gamma_k}\right)\right\}\big\rangle
		\]
		is a finite presentation for $p\left(\Gamma_1\times\cdots\times\Gamma_k\right)\cong \Gamma_1\times\cdots\times\Gamma_k/\ker\left(p|_{\Gamma_1\times\cdots\times\Gamma_k}\right)$.
	\end{itemize}
	From now on, $G$ is a compact, connected, simple Lie group not locally isomorphic to $SO(3)$.
	
	\vspace{0.2cm}
	\underline{\emph{Step 2:}} We may assume that the center $Z(G)$ is trivial.
	
	The center $Z(G)$ is a finite normal subgroup of $G$ and the quotient $G/Z(G)$ has trivial center, see \cite[Ex.~7.11(b)]{Fulton}. Denote by $p:G\rightarrow G/Z(G)$ the quotient map. If $\Gamma\subset G/Z(G)$ is a finitely presented, dense subgroup with property (T) as a discrete group, then the same holds for the subgroup $p^{-1}(\Gamma)\subset G$. Let us check this in detail.
	\begin{itemize}
		\item We have that $p^{-1}(\Gamma)\subset G$ is dense because the projection $p:G\rightarrow G/Z(G)$ is an open map. If $U\subset G$ is open, then $p(U)\subset G/Z(G)$ is also open hence $p(U)\cap\Gamma$ is non-empty. So there exists $g\in U$ such that $p(g)\in\Gamma$, hence $g\in U\cap p^{-1}(\Gamma)$.
		\item Now consider the short exact sequence
		\[
		1\rightarrow p^{-1}(\Gamma)\cap Z(G)\rightarrow p^{-1}(\Gamma)\rightarrow \Gamma\rightarrow 1.
		\]
		First, the discrete group $\Gamma$ has property (T) by assumption and the discrete group $p^{-1}(\Gamma)\cap Z(G)$ has property (T) because it is finite \cite[Prop.~1.1.5]{Kazhdan}. Since property (T) is preserved by group extensions \cite[Prop.~1.7.6]{Kazhdan}, also $p^{-1}(\Gamma)$ has property (T). Similarly, the group $\Gamma$ is finitely presented by assumption. The group $p^{-1}(\Gamma)\cap Z(G)$ is finite, hence finitely presented \cite[Chapter~4, Prop.~1]{groups}. Since the property of being finitely presented is closed under group extensions \cite[Chapter~10, Cor.~2]{groups}, it follows that also $p^{-1}(\Gamma)$ is finitely presented.
	\end{itemize}
	From now on, $G$ is a compact, connected and simple Lie group with $Z(G)=1$, not locally isomorphic to $SO(3)$. We now proceed with the construction of $\Gamma\subset G$. Since we will use the book \cite{Witte} by Witte Morris, we first check that we are in the situation of \cite[Def.~5.1.2]{Witte}.
	
	\vspace{0.2cm}
	\underline{\emph{Step 3:}} $G$ is a closed subgroup of $SL(n,\mathbb{R})$ that is defined over $\mathbb{Q}$.
	
	To realize $G$ as a closed subgroup of $SL(n,\mathbb{R})$, we use the adjoint action $\text{Ad}:G\rightarrow\text{Aut}(\mathfrak{g})$. Its kernel $Z(G)$ is assumed to be trivial, hence $\text{Ad}$ embeds $G$ as a closed subgroup of $\text{Aut}(\mathfrak{g})$. Moreover, since $G$ is unimodular and connected, we know that $\det\text{Ad}(g)=1$ for all $g\in G$.  So we can view $G\cong\text{Ad}(G)$ a closed subgroup of $SL(n,\mathbb{R})$ after choosing any basis for $\mathfrak{g}$. 
	
In fact, we can describe $\text{Ad}(G)$ more explicitly. Indeed, we claim that $\text{Ad}(G)=\text{Aut}(\mathfrak{g})^{0}$ where $\text{Aut}(\mathfrak{g})^{0}\subset\text{Aut}(\mathfrak{g})$ denotes the identity component. This equality follows from well-known results in Lie theory that can be found in \cite[Chapter I, \S 14]{Knapp}. In more detail, on one hand we have that $\text{Ad}(G)\subset\text{Aut}(\mathfrak{g})$ is a connected Lie subgroup with Lie algebra $\text{ad}(\mathfrak{g})$. On the other hand, $\text{Aut}(\mathfrak{g})^{0}\subset\text{Aut}(\mathfrak{g})$ is a connected Lie subgroup with Lie algebra $\text{Der}(\mathfrak{g})$. Since $\mathfrak{g}$ is simple, we have $\text{Der}(\mathfrak{g})=\text{ad}(\mathfrak{g})$ and therefore we get $\text{Ad}(G)=\text{Aut}(\mathfrak{g})^{0}$.
	
	We now argue that $G$ is defined over $\mathbb{Q}$. That is, we need to show that there exists a subset $\mathcal{Q}$ of the polynomial ring $\mathbb{Q}[x_{1,1},\ldots,x_{n,n}]$ such that
	\[
	\text{Var}(\mathcal{Q}):=\{g\in SL(n,\mathbb{R}): Q(g)=0,\ \forall Q\in\mathcal{Q}\}
	\]
	is a subgroup of $SL(n,\mathbb{R})$ and $G=\text{Var}(\mathcal{Q})^{0}$.  
	 As explained above, we actually need to show that $\text{Aut}(\mathfrak{g})^{0}$ is defined over $\mathbb{Q}$. To this end, recall that the Lie algebra $\mathfrak{g}$ has a $\mathbb{Q}$-form \cite[Rem.~13]{Cornulier}, i.e. it has a basis whose associated structure constants lie in $\mathbb{Q}$. It follows that there exists a subset $\mathcal{Q}$ of $\mathbb{Q}[x_{1,1},\ldots,x_{n,n}]$ such that 
	\[
	\text{Aut}(\mathfrak{g})=\{g\in GL(n,\mathbb{R}): Q(g)=0,\ \forall Q\in\mathcal{Q}\}.
	\]
	In particular, $\text{Var}(\mathcal{Q})=SL(n,\mathbb{R})\cap \text{Aut}(\mathfrak{g})$ is a subgroup of $SL(n,\mathbb{R})$. Moreover, we have
	\[
	\text{Aut}(\mathfrak{g})^{0}=\big(SL(n,\mathbb{R})\cap \text{Aut}(\mathfrak{g})\big)^{0}=\text{Var}(\mathcal{Q})^{0}.
	\]
	This shows that $\text{Aut}(\mathfrak{g})^{0}$ is defined over $\mathbb{Q}$. 
	In conclusion, we now showed that one can view $G$ as a closed subgroup of  $SL(n,\mathbb{R})$ that is defined over $\mathbb{Q}$.
	
	\vspace{0.2cm}
	\underline{\emph{Step 4:}} Construction of the desired subgroup $\Gamma\subset G$.
	
	Take a number field $K\subset\mathbb{R}$ of degree $3$ that is not totally real, for instance $K:=\mathbb{Q}(\sqrt[3]{2})$. It has $3$ distinct field embeddings $K\hookrightarrow\mathbb{C}$ corresponding to the roots of $X^{3}-2=0$. They are given by $\text{Id},\sigma$ and its complex conjugate $\overline{\sigma}$, where $\sigma:K\hookrightarrow\mathbb{C}$ is given by 
	\[
	\begin{cases}
		\sigma(x)=x\ \text{for}\ x\in\mathbb{Q}\\
		\sigma(\sqrt[3]{2})=\sqrt[3]{2}e^{2\pi i/3}
	\end{cases}.
	\]
	The ring of integers of $K$ is $\mathbb{Z}[\sqrt[3]{2}]$. We now apply a well-known construction of lattices by Borel-Harish-Chandra \cite{harish}. We follow \cite[Prop.~5.5.8]{Witte}, which is a reformulation of the latter.
	
	In Step 3, we found a set of polynomials $\mathcal{Q}\subset\mathbb{Q}[x_{1,1},\ldots,x_{n,n}]$ such that $G=\text{Var}(\mathcal{Q})^{0}$. Consider the complexification
	\[
	\text{Var}_{\mathbb{C}}(\mathcal{Q}):=\{g\in SL(n,\mathbb{C}): Q(g)=0,\ \forall Q\in \mathcal{Q}\},
	\]
	and define the Galois conjugate $G^{\sigma}$ of $G$ to be the identity component $G^{\sigma}:=\text{Var}_{\mathbb{C}}(\mathcal{Q})^{0}$. This is a complex Lie group with Lie algebra $\mathfrak{g}\otimes\mathbb{C}$, see \cite[\S~18.1.3 and \S~18.1.8]{Witte}. We can now recall the result \cite[Prop.~5.5.8]{Witte}. It states that the natural map
	\[
	\Delta:K\rightarrow\mathbb{R}\times\mathbb{C}:\gamma\mapsto(\gamma,\sigma(\gamma))
	\]
	embeds a finite index subgroup $\dot{G}(\mathbb{Z}[\sqrt[3]{2}])\subset G(\mathbb{Z}[\sqrt[3]{2}])$ as an irreducible lattice in $G\times G^{\sigma}$. Here $G(\mathbb{Z}[\sqrt[3]{2}])$ consists of the elements of $G$ whose entries lie in the ring of integers $\mathbb{Z}[\sqrt[3]{2}]$. At last, we can define the desired subgroup $\Gamma\subset G$ by setting
	$$
	\Gamma:=\dot{G}(\mathbb{Z}[\sqrt[3]{2}]).
	$$
	Let us check that $\Gamma\subset G$ is indeed finitely presented, dense and a discrete Kazhdan group.
	\begin{itemize}
		\item Since $G$ is compact, the lattice $\Delta(\Gamma)\subset G\times G^{\sigma}$ is cocompact by \cite[Cor.~5.5.10]{Witte}. As cocompact lattices in connected Lie groups are finitely presented \cite[Thm.~6.15]{Rag}, it follows that $\Delta(\Gamma)$ is finitely presented. Hence the same holds for $\Gamma$.
		\item The fact that $\Gamma\subset G$ is dense follows from irreducibility of the lattice $\Delta(\Gamma)\subset G\times G^{\sigma}$. By definition \cite[Def.~4.3.1]{Witte}, this means that $\Delta(\Gamma)N$ is dense in $G\times G^{\sigma}$ for every non-compact, closed, normal subgroup $N\subset G\times G^{\sigma}$. In particular, we can take $N$ to be $\{1\}\times G^{\sigma}$. Indeed, $G^{\sigma}$ is not compact because it is of finite index in $\text{Var}_{\mathbb{C}}(\mathcal{Q})$. If $G^{\sigma}$ were compact then also $\text{Var}_{\mathbb{C}}(\mathcal{Q})$ would be compact, which is impossible since $\text{Var}_{\mathbb{C}}(\mathcal{Q})$ is a complex affine variety. So irreducibility of $\Delta(\Gamma)\subset G\times G^{\sigma}$ implies that 
		\[
		\Delta(\Gamma)(\{1\}\times G^{\sigma})\subset G\times G^{\sigma}
		\]
		is dense. Since the first projection $\text{pr}_1:G\times G^{\sigma}\rightarrow G$ is continuous and surjective, we get that
		$
		\text{pr}_1\big(\Delta(\Gamma)(\{1\}\times G^{\sigma})\big)=\Gamma
		$
		is a dense subgroup of $G$.
		\item At last, we check that $\Gamma$ has property (T) as a discrete group. Equivalently, we show that $\Delta(\Gamma)$ has property (T) as a discrete group. Since property (T) is inherited by lattices \cite[Thm.~1.7.1]{Kazhdan}, it suffices to argue that $G\times G^{\sigma}$ has property (T). Recall that compact groups have property (T) by \cite[Prop.~1.1.5]{Kazhdan} and that the product of property (T) groups has property (T) by \cite[Prop.~1.7.8]{Kazhdan}. Hence, it remains to show that $G^{\sigma}$ has property (T). First note that $G^{\sigma}$ is an almost $\mathbb{C}$-simple algebraic group, meaning that the only proper
		algebraic normal subgroups $N$ of $G^{\sigma}$ defined over $\mathbb{C}$ are finite. Indeed, the Lie algebra $\mathfrak{n}$ of such a subgroup $N$ is an ideal in the complex Lie algebra $\mathfrak{g}\otimes\mathbb{C}$ of $G^{\sigma}$, see \cite[\S~10.4, Cor.~A]{Humphreys}. However, the complexification $\mathfrak{g}\otimes\mathbb{C}$ of the compact, simple Lie algebra $\mathfrak{g}$ is again simple \cite{Kirillov}. So either $\mathfrak{n}=\mathfrak{g}\otimes\mathbb{C}$ and therefore $N=G^{\sigma}$, or $\mathfrak{n}=0$ and therefore $N$ is finite since $N$ is algebraic. This shows that $G^{\sigma}$ is almost $\mathbb{C}$-simple. It follows that, if $G^{\sigma}$ did not have property (T), then necessarily it would be locally isomorphic to $SL(2,\mathbb{C})$ by \cite[Thm.~1.6.1]{Kazhdan} and \cite[Rem.~1.6.3]{Kazhdan}. At the Lie algebra level, we get that $\mathfrak{sl}(2,\mathbb{C})\cong\mathfrak{g}\otimes\mathbb{C}$, i.e. $\mathfrak{g}$ is a real form for the complex Lie algebra $\mathfrak{sl}(2,\mathbb{C})$. So either $\mathfrak{g}\cong\mathfrak{sl}(2,\mathbb{R})$ or $\mathfrak{g}\cong\mathfrak{su}(2)$. The first case is impossible since $\mathfrak{g}$ is compact and $\mathfrak{sl}(2,\mathbb{R})$ is not. If $\mathfrak{g}\cong\mathfrak{su}(2)\cong\mathfrak{so}(3)$ then $G$ is locally isomorphic with $SO(3)$, which we excluded. So we reach a contradiction, which shows that $G^{\sigma}$ has property (T). Hence $\Gamma$ has property (T) as well.

	\end{itemize}
	

	\color{black}
	
	\subsection{A concrete example}\label{sec:example}
	
	The aim of this subsection is to work out a concrete example of Thm.~\ref{thm:main-result}. It deviates slightly from the algorithm described above but uses similar ideas. The main task is to construct a discrete Kazhdan group $\Gamma$ which can serve as the fundamental group $\pi_1(B)$ in Thm.~\ref{thm:main-result}. We will obtain $\Gamma$ as a suitable arithmetic lattice in $SO(3,2)$ which can be realized as a dense subgroup of $SO(5)$.  Our interest in this example stems from the fact that the lattice in question was used to solve the Ruziewicz problem for $n\geq 4$, showing that the Lebesgue measure is the only finitely additive measure defined on all Lebesgue measurable subsets of the sphere $S^{n}$ that is invariant under the action of $SO(n+1)$ and of total measure $1$, see \cite[Thm.~3.4.2]{Lubotzky}.

	\subsubsection{Arithmetic lattices}
	We start by recalling an arithmetic construction of lattices, following \cite[Chapter IX, \S 1.7]{Margulis}.
	Fix an integer $n\geq 3$ and a subfield $K\subset\mathbb{C}$ which is a finite extension of $\mathbb{Q}$. Denote by $d=[K:\mathbb{Q}]$ the degree of the extension. Assume that 
	\[
	\Phi=\sum_{1\leq i,j\leq n}a_{ij}x_ix_j
	\]
	is a non-degenerate quadratic form in $n$ variables with coefficients in $K$. As usual, $a_{ij}=a_{ji}$. Recall that there are $d$ distinct field embeddings $\sigma_1,\ldots,\sigma_d:K\rightarrow\mathbb{C}$ with $\sigma_1=\text{Id}$. We call such an embedding $\sigma$ real if $\sigma(K)\subset\mathbb{R}$, otherwise it is called imaginary. Two embeddings of $K$ into $\mathbb{C}$ are called equivalent if one is the complex conjugate of the other. Choosing a representative in each equivalence class of embeddings, we obtain the set
	\[
	\mathcal{R}:=\{\sigma_1,\ldots,\sigma_l\}.
	\]
	For every $\sigma\in\mathcal{R}$, we define $k_{\sigma}:=\mathbb{R}$ if $\sigma$ is real and $k_{\sigma}:=\mathbb{C}$ if $\sigma$ is imaginary. Each field embedding $\sigma\in\mathcal{R}$ gives rise to a new quadratic form
	\begin{equation}\label{eq:sigma-phi}
		\Phi_{\sigma}=\sum_{1\leq i,j\leq n}\sigma(a_{ij})x_ix_j
	\end{equation}
	and we set
	\[
	\mathcal{T}=\{\sigma\in\mathcal{R}:\ k_{\sigma}=\mathbb{R}\ \text{and}\ \Phi_{\sigma}\ \text{is either positive definite or negative definite}\}.
	\]
	Choose a subset $\mathcal{S}\subset\mathcal{R}$ such that $\mathcal{R}\setminus\mathcal{T}\subset\mathcal{S}$. Denote by $SO_{\Phi}$ the special orthogonal group of the quadratic form $\Phi$, consisting of complex matrices with determinant $1$ preserving $\Phi$. It is an algebraic group defined over $K$. We let $\mathcal{O}$ be the ring of integers in $K$ and denote by $SO_{\Phi}(\mathcal{O})$ the group of $\mathcal{O}$-valued matrices in $SO_{\Phi}$. Similarly, we define for each $\sigma\in \mathcal{S}$ the group $SO_{\Phi_{\sigma}}(k_{\sigma})$ of $k_{\sigma}$-valued matrices with determinant $1$ preserving the quadratic form $\Phi_{\sigma}$ defined in \eqref{eq:sigma-phi}.
	In the following, we identify $SO_{\Phi}(\mathcal{O})$ with its image under the embedding
	\[
	\prod_{\sigma\in \mathcal{S}}\sigma:SO_{\Phi}(\mathcal{O})\rightarrow\prod_{\sigma\in \mathcal{S}}SO_{\Phi_{\sigma}}(k_{\sigma}).
	\]
	\begin{prop}\cite[Chapter IX, \S 1.7]{Margulis}\label{prop:lattice}
		Assume that $\mathcal{T}\neq\mathcal{R}$ and that the group $SO_{\Phi}$ is almost $K$-simple. Then $SO_{\Phi}(\mathcal{O})$ is an irreducible arithmetic lattice in $\prod_{\sigma\in \mathcal{S}}SO_{\Phi_{\sigma}}(k_{\sigma})$.
	\end{prop}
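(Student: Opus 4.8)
The plan is to follow the classical Borel--Harish-Chandra theory and its generalization to $S$-arithmetic groups, which is precisely what \cite[Chapter IX, \S 1.7]{Margulis} codifies; so at the level of a proof \emph{proposal} I would reduce the statement to the standard arithmeticity theorem rather than reprove it from scratch. First I would set up the restriction of scalars: the algebraic group $\mathbf{G} = SO_{\Phi}$ defined over $K$ gives rise, via Weil's restriction of scalars $R_{K/\mathbb{Q}}$, to an algebraic group $\mathbf{H} = R_{K/\mathbb{Q}}(\mathbf{G})$ over $\mathbb{Q}$, whose group of real points decomposes as $\prod_{\sigma}\mathbf{G}(k_{\sigma})$ over the archimedean places $\sigma$ of $K$ (real embeddings contributing $SO_{\Phi_{\sigma}}(\mathbb{R})$ and each conjugate pair of imaginary embeddings contributing one factor $SO_{\Phi_{\sigma}}(\mathbb{C})$). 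Under this identification $SO_{\Phi}(\mathcal{O})$ corresponds to $\mathbf{H}(\mathbb{Z})$ up to commensurability, so the Borel--Harish-Chandra theorem tells us $SO_{\Phi}(\mathcal{O})$ is a lattice in $\mathbf{H}(\mathbb{R}) = \prod_{\sigma\in\mathcal{R}}SO_{\Phi_{\sigma}}(k_{\sigma})$.

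The next step is to pass from the full archimedean product $\prod_{\sigma\in\mathcal{R}}$ to the sub-product $\prod_{\sigma\in\mathcal{S}}$ indexed by the chosen set $\mathcal{S}\supset\mathcal{R}\setminus\mathcal{T}$. The key observation is that for $\sigma\in\mathcal{T}$ the form $\Phi_{\sigma}$ is definite, so $SO_{\Phi_{\sigma}}(\mathbb{R})$ is \emph{compact}. Projecting a lattice in a product $G_1\times G_2$ with $G_2$ compact onto $G_1$ yields again a lattice (the projection is a proper map with compact kernel-direction, so it sends discrete finite-covolume subgroups to discrete finite-covolume subgroups). Quotienting out all the compact factors $SO_{\Phi_{\sigma}}(\mathbb{R})$, $\sigma\in\mathcal{T}\setminus\mathcal{S}$, one is left with a lattice in $\prod_{\sigma\in\mathcal{S}}SO_{\Phi_{\sigma}}(k_{\sigma})$. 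Here one must check that the image is still discrete; this is where the hypothesis $\mathcal{R}\setminus\mathcal{T}\subset\mathcal{S}$ is used, ensuring no \emph{noncompact} archimedean factor has been discarded — if it had, the projection could fail to be discrete (as already happens for $\mathbb{Z}[\sqrt 2]$ diagonally embedded in $\mathbb{R}^2$).

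Finally, the hypotheses $\mathcal{T}\neq\mathcal{R}$ and ``$SO_{\Phi}$ almost $K$-simple'' are what make the result nontrivial and the word \emph{arithmetic} meaningful: $\mathcal{T}\neq\mathcal{R}$ guarantees that the ambient group $\prod_{\sigma\in\mathcal{S}}SO_{\Phi_{\sigma}}(k_{\sigma})$ is noncompact (there is at least one indefinite real place or one complex place in $\mathcal{S}$), so that $SO_{\Phi}(\mathcal{O})$ is an infinite lattice rather than a finite group; and almost $K$-simplicity ensures $\mathbf{H}$ has no $\mathbb{Q}$-anisotropic-type pathologies and that the lattice is irreducible in the relevant sense, so that the construction genuinely produces an \emph{arithmetic} lattice in Margulis's sense (a subgroup commensurable with the $\mathcal{O}$-points of a $K$-group under an admissible embedding). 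I would then simply cite Margulis for the precise statement. The main obstacle, and the only genuinely delicate point, is the discreteness of the projection after removing the compact factors — everything else is bookkeeping with restriction of scalars and the Borel--Harish-Chandra theorem. Since the statement is quoted verbatim from \cite{Margulis}, I expect the paper's ``proof'' to consist of exactly this citation together with the remark that the hypotheses match, and I would write it the same way.
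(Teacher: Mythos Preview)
Your proposal is correct and, as you anticipated, the paper gives no proof at all: the proposition is stated with the citation to \cite[Chapter IX, \S 1.7]{Margulis} and is followed only by the remark that almost $K$-simplicity is equivalent to $n\neq 4$ or ($n=4$ and the discriminant of $\Phi$ is not a square in $K$). Your sketch via restriction of scalars and Borel--Harish-Chandra is the standard route and matches what Margulis does.
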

	
	Almost $K$-simplicity means that proper algebraic $K$-closed normal subgroups of $SO_{\Phi}$ are finite. This condition is equivalent to the following: either $n\neq 4$ or $n=4$ and the discriminant of the quadratic form $\Phi$ is not a square in $K$. The lattice $SO_{\Phi}(\mathcal{O})$ is non-cocompact if the quadratic form $\Phi$ represents $0$ over $K$ non-trivially, and cocompact otherwise.

	\subsubsection{An example}
	
	We let $n=5$ and consider the field extension $\mathbb{Q}\subset\mathbb{Q}(\sqrt{2})$ of degree $2$. Consider the non-degenerate quadratic form
	\[
	\Phi=x_1^{2}+x_2^{2}+x_3^{2}-\sqrt{2}x_4^{2}-\sqrt{2}x_5^{2}
	\]
	in $5$ variables with coefficients in $\mathbb{Q}(\sqrt{2})$. There are $2$ field embeddings $\sigma_1,\sigma_2:\mathbb{Q}(\sqrt{2})\rightarrow\mathbb{C}$, namely $\sigma_1=\text{Id}$ and $\sigma_2$ given by
	\begin{equation}\label{eq:galois}
		\sigma_2(a+b\sqrt{2})=a-b\sqrt{2},\hspace{0.5cm}a,b\in\mathbb{Q}.
	\end{equation}
	Both are real, and we have $\mathcal{R}=\{\sigma_1,\sigma_2\}$. The embedding $\sigma_2$ defines a new quadratic form
	\[
	\Phi_{\sigma_2}=x_1^{2}+x_2^{2}+x_3^{2}+\sqrt{2}x_4^{2}+\sqrt{2}x_5^{2},
	\]
	and in this case we have $\mathcal{T}=\{\sigma_2\}$. Let us choose $\mathcal{S}=\{\sigma_1,\sigma_2\}$. Applying Prop.~\ref{prop:lattice} gives an arithmetic lattice 
	\[
	\Gamma:=SO_{\Phi}(\mathbb{Z}[\sqrt{2}])\subset SO(3,2)\times SO(5).
	\]
	The first projection in $SO(3,2)\times SO(5)$ realizes $\Gamma$ as a lattice in $SO(3,2)$, while the second projection in $SO(3,2)\times SO(5)$ realizes $\Gamma$ as a subgroup of the compact Lie group $SO(5)$.
	We now check that this lattice can be used to construct an example of Thm.~\ref{thm:main-result}.
	
	\begin{prop}
		Let $\Gamma$ be the discrete group defined above. It satisfies the following:
		\begin{enumerate}
			\item $\Gamma$ is finitely presented.
			\item The second projection $\text{pr}_{2}:\Gamma\rightarrow SO(5)$ has dense image.
			\item $\Gamma$ has property (T) as a discrete group.
		\end{enumerate}
	\end{prop}
	\begin{proof}
		$(1)$ We view $\Gamma$ as a subgroup of $SO(3,2)$. Let $SO(3,2)^{+}$ be the identity component of $SO(3,2)$, which is a normal subgroup of index $2$, see \cite[Prop.~1.145]{Knapp}. The property of being finitely presented is closed under group extensions \cite[Chapter~10, Cor.~2]{groups}, hence it follows that $\Gamma$ is finitely presented when $\Gamma\cap SO(3,2)^{+}$ and $\Gamma/\Gamma\cap SO(3,2)^{+}$ are. For the latter, note that
		\[
		\frac{\Gamma}{\Gamma\cap SO(3,2)^{+}}\cong\frac{\Gamma SO(3,2)^{+}}{SO(3,2)^{+}}=\frac{SO(3,2)}{SO(3,2)^{+}}
		\]
		has order $2$, hence it has a finite presentation $\langle a | a^{2}\rangle$. To see that $\Gamma\cap SO(3,2)^{+}$ is finitely presented, we argue as follows. It is known that cocompact lattices in connected Lie groups are finitely presented, see \cite[Thm.~6.15]{Rag}. So we only have to argue that $\Gamma\cap SO(3,2)^{+}$ is a cocompact lattice in $SO(3,2)^{+}$. Because $SO(3,2)^{+}\subset SO(3,2)$ is an open subgroup, it suffices to show that $\Gamma\subset SO(3,2)$ is a cocompact lattice, see \cite[\S~2.C.]{caprace}. To check that the lattice $\Gamma\subset SO(3,2)$ is cocompact, we only need to argue that the quadratic form $\Phi$ cannot represent $0$ over $\mathbb{Q}(\sqrt{2})$ non-trivially. To do so, assume that $x_1,\ldots,x_5\in\mathbb{Q}(\sqrt{2})$ satisfy
		\[
		x_1^{2}+x_2^{2}+x_3^{2}-\sqrt{2}x_4^{2}-\sqrt{2}x_5^{2}=0.
		\]
		Applying the Galois automorphism $\sigma_2$ of $\mathbb{Q}(\sqrt{2})$ defined in \eqref{eq:galois}, we get
		\[
		\big(\sigma_2(x_1)\big)^{2}+\big(\sigma_2(x_2)\big)^{2}+\big(\sigma_2(x_3)\big)^{2}+\sqrt{2}\big(\sigma_2(x_4)\big)^{2}+\sqrt{2}\big(\sigma_2(x_5)\big)^{2}=0.
		\]
		It follows that $\sigma_2(x_1)=\cdots=\sigma_2(x_5)=0$ and therefore $x_1=\cdots=x_5=0$. Hence the quadratic form $\Phi$ does not represent $0$ over $\mathbb{Q}(\sqrt{2})$ non-trivially, and therefore $\Gamma\subset SO(3,2)$ is cocompact. Altogether, this proves that the group $\Gamma$ is finitely presented.
		
		$(2)$ This was proved in \cite[Prop.~3.4.3]{Lubotzky}.

		$(3)$ We view $\Gamma$ as a lattice inside $SO(3,2)$. Because property (T) is inherited by lattices \cite[Thm.~1.7.1]{Kazhdan}, it suffices to show that $SO(3,2)$ has property (T). As before, we denote by $SO(3,2)^{+}\subset SO(3,2)$ the identity component, which is a closed normal subgroup. It follows that $SO(3,2)$ has property (T) as soon as we show that $SO(3,2)^{+}$ and $SO(3,2)/SO(3,2)^{+}$ have property (T), see \cite[Prop.~1.7.6]{Kazhdan}. First, the identity component $SO(3,2)^{+}$ has property (T) since it is a connected simple Lie group with Lie algebra not of the form $\mathfrak{so}(n,1)$ or $\mathfrak{su}(n,1)$, see \cite[Thm.~3.5.4]{Kazhdan}. Next, the quotient $SO(3,2)/SO(3,2)^{+}$ is discrete because $SO(3,2)^{+}\subset SO(3,2)$ is open. As finite discrete groups are compact, $SO(3,2)/SO(3,2)^{+}$ has property (T) by \cite[Prop.~1.1.5]{Kazhdan}. We conclude that $\Gamma$ has property (T).
	\end{proof}
	
	\color{black}

	Fix a compact, connected manifold $4$-dimensional manifold $B$ with $\pi_1(B)=\Gamma$. Applying Thm.~\ref{thm:main-result}, the suspension of $\text{pr}_{2}:\pi_1(B)\rightarrow SO(5)$ gives an infinitesimally rigid Lie foliation $\mathcal{F}$ with dense leaves on the compact manifold $B_{\text{pr}_{2}}$.

	\begin{remark}
		The above argument works for all $n\geq 5$, considering the quadratic form
		\[
		\Phi=x_1^{2}+\cdots+x_{n-2}^{2}-\sqrt{2}x_{n-1}^{2}-\sqrt{2}x_{n}^{2}.
		\]
		We get an arithmetic lattice $\Gamma\subset SO(n-2,2)\times SO(n)$ with property (T) which projects to a dense subgroup in $SO(n)$, see \cite[Prop.~3.4.3]{Lubotzky}.
		The method fails for $n<5$. Actually, Thm.~\ref{thm:main-result} admits no examples in which $G=SO(n)$ with $n<5$. Clearly, $SO(2)$ is excluded by Prop.~\ref{prop:obstruction} because it is abelian hence its Lie algebra is not perfect. The fact that $SO(3)$ and $SO(4)$ are impossible follows from Thm.~\ref{thm:no-so3} $(2)$. To see that $SO(3)$ is a quotient of $SO(4)$, recall the isomorphism of Lie algebras $\mathfrak{so}(4)\cong\mathfrak{so}(3)\oplus\mathfrak{so}(3)$ and use Lemma~\ref{lem:rephrase}.
		
	\end{remark}

	\end{document}